\newcommand{\hide}[1]{}
\numberwithin{equation}{section}
\def\eps{\varepsilon}
\def\dD{\mathop{{\rm d}_\D}}\def\domega{\mathop{{\rm d}_\Omega}}
\newcommand{\D}{\mathbb D}
\newcommand{\R}{\mathbb R}
\newcommand{\C}{\mathbb C}
\newcommand{\Aut}{{\sf Aut}(\mathbb D)}
\def\dist{{\rm dist}}
\def\Aut{{\sf Aut}}
\def\1#1{\overline{#1}}
\def\2#1{\widetilde{#1}}
\def\3#1{\widehat{#1}}
\def\4#1{\mathbb{#1}}
\def\5#1{\frak{#1}}
\def\6#1{{\mathcal{#1}}}
\newcommand{\mcite}[1]{\csname b@#1\endcsname}
\theoremstyle{theorem}
\def\dist{{\rm dist}}
\def\Aut{{\sf Aut}}
\newtheorem{theorem}{Theorem}[section]
\newtheorem{lemma}[theorem]{Lemma}
\newtheorem{corollary}[theorem]{Corollary}
\theoremstyle{definition}
\newtheorem{example}[theorem]{Example}
\theoremstyle{remark}
\newtheorem{remark}{Remark}
\newtheorem{problem1}[theorem]{Problem}
\numberwithin{equation}{section}
\title[Rigidity]{The strong form of the Ahlfors--Schwarz lemma at the\\[2mm]  boundary and a rigidity result for Liouville's equation}
\author[F. Bracci]{Filippo Bracci$^\dag$}
\address{F. Bracci: Dipartimento di Matematica, Universit\`a di Roma ``Tor Vergata", Via della Ricerca
  Scientifica 1, 00133, Roma, Italia.} \email{fbracci@mat.uniroma2.it}
\author[D. Kraus]{Daniela Kraus}
\address{D. Kraus: Department of Mathematics, University of W\"urzburg, Emil Fischer Strasse 40, 97074, W\"urzburg, Germany.} \email{dakraus@mathematik.uni-wuerzburg.de}
\author[O. Roth]{Oliver Roth}
\address{O. Roth: Department of Mathematics, University of W\"urzburg, Emil Fischer Strasse 40, 97074, W\"urzburg, Germany.} \email{roth@mathematik.uni-wuerzburg.de}
\keywords{}
\thanks{$^\dag\,$Partially supported by  PRIN {\sl  Real and Complex Manifolds: Geometry and Holomorphic Dynamics} n. 2022AP8HZ9, by INdAM, and by the   MUR Excellence Department Project MatMod@TOV
CUP:E83C23000330006}
\long\def\REM#1{\relax}
\begin{document}
\maketitle

\selectlanguage{english}
\begin{abstract}
  We prove a boundary version of the strong form of the Ahlfors--Schwarz lemma with optimal error term. This result
  provides   nonlinear extensions of the boundary Schwarz lemma of Burns and Krantz to the class  of negatively curved conformal pseudometrics defined on arbitary hyperbolic domains in the complex plane.
  Based on a new boundary Harnack inequality for solutions of the Gauss curvature equation, we also establish a
  sharp rigidity result for conformal metrics with isolated singularities.  In the particular case of constant negative curvature this  strengthens  classical results  of Nitsche and Heins about  Liouville's equation $\Delta u=e^u$.    
\end{abstract}

\renewcommand{\thefootnote}{\fnsymbol{footnote}}
\setcounter{footnote}{2}

\section{Introduction} \label{sec:intro}

In a recent paper \cite{BKR} we established boundary versions of the strong form of the Ahlfors and Schwarz--Pick lemmas with optimal error bounds for the unit disk $\D:=\{z \in \C \, : \, |z|<1\}$. In the present paper we study  the  boundary version of the strong form of the Ahlfors--Schwarz lemma for arbitrary hyperbolic subdomains of the Riemann sphere. The emphasis is on  the sharpness of the error bound.

The motivation for extending the boundary Ahlfors--Schwarz lemma for the unit $\D$ proved in \cite[Theorem 2.6]{BKR} to other domains and investigating the sharpness of the error term is twofold.
One source of motivation comes from the fact that in the case of the unit disk the boundary Ahlfors--Schwarz lemma of \cite{BKR} provides an extension of the well--known boundary Schwarz lemma of Burns and Krantz \cite{BurnsKrantz1994}, and that much recent work (see e.g.~\cite{BaraccoZaitsevZampieri2006,LiuTang2016,TLZ17,Zimmer2018}) on boundary rigidity of holomorphic maps in one and several variables has  been devoted to relate the error term in the theorem of Burns--Krantz to the geometry of the domain.

A second motivation comes from the special case of  isolated boundary points,  a situation  which has  been of longlasting and continuing  interest in complex analysis, nonlinear PDE and two--dimensional conformal geometry, see \cite{ASVV2010,CW94,CW95,FSX2019,Gil,Heins1962,HT92,KR2008,LLX2020,McO93,Min97,Nit57,Pic1893,Pic1905,Poi1898,Yamada1988}.
 In particular, we establish with Theorem \ref{thm:3} a boundary version of the strong form of the Ahlfors--Schwarz lemma for a punctured disk with best possible remainder term. As a consequence, we obtain a rigidity result for negatively curved conformal metrics with isolated singularities in the spirit of the theorem of Burns--Krantz with an optimal error bound.
 This rigidity result seems to be new even in the case of constant negative curvature. As an application we derive   an extension of classical results of Nitsche \cite{Nit57} and Heins \cite{Heins1962} about the solutions of Liouville's equation $\Delta u=e^{u}$ with isolated singularities. The main technical tools are a new boundary Harnack--type inequality (Theorem \ref{lem:harnack}) and an accompanying Hopf lemma (Corollary \ref{cor:hopf}) for solutions of the Gauss curvature equation on punctured disks.

In the next section we  state our main results. We assume some familiarity with  basic facts about  conformal pseudometrics as described in any of the standard texts on the subject such as \cite{Marco,Ahlfors2010,BeardonMinda2007,Hayman1989,KeenLakic2007}. However, we  collect together the most relevant properties of conformal metrics  in Section \ref{sec:prelims}.  
 We prove our results in Section \ref{sec:proofs} and conclude with a list of open problems in  Section \ref{sec:questions}.

\section{Results}

Let 
$\Omega$ be a hyperbolic subdomain of the Riemann sphere $\hat{\C}=\C\cup\{\infty\}$, that is, a non--empty, open and connected subset of $\hat{\C}$ such that  $\hat{\C} \setminus \Omega$ contains at least three pairwise distinct points. We denote the hyperbolic (or Poincar\'e) metric of $\Omega$ by   $\lambda_{\Omega}(z) \, |dz|$
and the associated hyperbolic distance by $\domega$. We  emphasize that throughout this paper we use the convention that every hyperbolic metric has  constant Gauss curvature $-4$ (see Section \ref{sec:prelims}  for the precise definition of the curvature concept which we employ in this paper).

The Poincar\'e metric $\lambda_{\Omega}(z) \, |dz|$  has the following fundamental  extremal property. If $\lambda(z) \, |dz|$ is any   conformal pseudometric on $\Omega$ whose curvature $\kappa_{\lambda}$  is bounded above by $-4$, then
\begin{equation} \label{eq:ahlf1}
 \frac{\lambda(z)}{\lambda_{\Omega}(z)}\le 1 \quad \text{ for all } z \in \Omega \, .
  \end{equation}
  This is the content of Ahlfors' celebrated extension \cite{Ahlfors1938} of the Schwarz--Pick lemma.
  Heins \cite{Heins1962} (see also J{\o}rgensen \cite{Jorgensen1956}) has made the important  observation
  that if equality holds in Ahlfors' inequality (\ref{eq:ahlf1}) at just one point in the domain, then equality holds for all points.  See e.g.~Royden \cite{Royden1986}, Minda \cite{Minda1987}, Chen \cite{Chen} and also \cite[Lemma 3.2]{KR2013} for different proofs.
  This  strong form of Ahlfors' lemma  encapsulates a very useful \textit{interior} rigidity property of conformal metrics. It has the following counterpart on the boundary.

  \begin{theorem}[The strong form of the Ahlfors--Schwarz lemma at the boundary] \label{thm:main1new}
    Let $\Omega$ be a hyperbolic subdomain of $\hat{\C}$ and let $\lambda(z)\, |dz|$ be a conformal pseudometric on $\Omega$ with curvature $\le -4$. Suppose that $\{z_n\}$ is a sequence of points in $\Omega$ tending to a boundary point $p \in \partial \Omega$. If
    \begin{equation} \label{eq:anew}
  \frac{\lambda(z_n)}{\lambda_{\Omega}(z_n)}=1+o\left(
    e^{-4 \domega(z_n,q)} \right) \qquad (n \to \infty) 
  \end{equation}
  for one -- and hence for any -- $q \in \Omega$, then
  $$\lambda=\lambda_{\Omega} \, .$$ 
    \end{theorem}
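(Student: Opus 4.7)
The plan is to reduce the theorem to the disk case, namely the boundary Ahlfors--Schwarz lemma \cite[Theorem 2.6]{BKR} already known for $\D$, by lifting to the universal cover. Let $\pi:\D \to \Omega$ be a universal covering map. Since we have normalized all curvatures by $-4$, the Poincar\'e metric is characterized by $\pi^*\lambda_\Omega = \lambda_\D$, and the pulled back metric $\tilde{\lambda}:=\pi^*\lambda$ is a conformal pseudometric on $\D$ with curvature $\le-4$. Because $\pi$ is a local biholomorphism, $\lambda = \lambda_\Omega$ will hold on $\Omega$ as soon as we can prove $\tilde\lambda=\lambda_\D$ on $\D$.

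Next I would transfer the hypothesis to the cover. Fix an arbitrary lift $\tilde{q}\in\pi^{-1}(q)$ and, for each $n$, choose among the discrete fiber $\pi^{-1}(z_n)$ a lift $\tilde{z}_n$ which minimizes hyperbolic distance to $\tilde{q}$. By the standard description of $\domega$ as the quotient distance,
\[
\domega(z_n,q)= \dD(\tilde z_n,\tilde q).
\]
Since $z_n\to p \in \partial\Omega$, we have $\domega(z_n,q)\to\infty$, and hence $\dD(\tilde z_n,\tilde q)\to\infty$, which forces $\tilde z_n$ to accumulate on $\partial\D$; after passing to a subsequence we may assume $\tilde z_n\to \tilde p\in\partial\D$. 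Moreover, because $\pi$ is a local isometry, $\tilde\lambda(\tilde z_n)/\lambda_\D(\tilde z_n)=\lambda(z_n)/\lambda_\Omega(z_n)$, so the assumption \eqref{eq:anew} becomes, at the chosen lifts,
\[
\frac{\tilde\lambda(\tilde z_n)}{\lambda_\D(\tilde z_n)}=1+o\bigl(e^{-4\dD(\tilde z_n,\tilde q)}\bigr)\qquad(n\to\infty).
\]

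Now I would invoke the disk version of the strong form of the boundary Ahlfors--Schwarz lemma, \cite[Theorem 2.6]{BKR}, applied to the pseudometric $\tilde\lambda$ on $\D$ at the boundary point $\tilde p$ and the base point $\tilde q$. That result gives at once $\tilde\lambda=\lambda_\D$ on all of $\D$. Pushing this equality down via $\pi$ (and using that the identity $\pi^*\lambda=\pi^*\lambda_\Omega$ is equivalent to $\lambda=\lambda_\Omega$ since $\pi$ is locally biholomorphic and surjective) yields $\lambda=\lambda_\Omega$ on $\Omega$, as required. The final observation that the particular choice of base point $q$ is immaterial follows, as in the statement, from the triangle inequality $|\domega(z_n,q)-\domega(z_n,q')|\le \domega(q,q')$, which changes the exponential factor only by a bounded multiplicative constant and hence leaves the little--$o$ condition invariant.

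The only genuine step requiring care is verifying that the optimal little--$o$ rate transfers faithfully to the cover: this is where the choice of \emph{minimizing} lifts $\tilde z_n$ is essential, since for a generic lift one would only obtain $\dD(\tilde z_n,\tilde q)\ge \domega(z_n,q)$, producing a weaker bound that is useless for applying \cite[Theorem 2.6]{BKR}. With the minimizing choice the bound is sharp and no information is lost. I do not anticipate any further substantive difficulty beyond this bookkeeping, as all the analytic content has been packaged into the disk result of \cite{BKR}.
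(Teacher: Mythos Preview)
Your proposal is correct and follows essentially the same route as the paper: lift to the universal cover $\pi:\D\to\Omega$, pull back $\lambda$, transfer the asymptotic hypothesis to the disk via the identity $\domega(z_n,q)=\dD(\tilde z_n,\tilde q)$ for suitably chosen lifts, and then invoke the known disk case from \cite{BKR}. The only cosmetic differences are that the paper normalizes $\pi(0)=q$ and constructs the lifts $w_n$ by lifting almost-minimizing curves (obtaining $|\dD(w_n,0)-\domega(z_n,q)|\le 1/n$ rather than exact equality), and it applies \cite[Theorem~2.1]{BKR} directly with $|w_n|\to 1$, so no passage to a subsequence converging to a definite boundary point is needed.
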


    By completeness of the hyperbolic distance,   $\domega(z_n,q)
  \to +\infty$ as $z_n$ tends to the boundary point $p$. Hence Theorem~\ref{thm:main1new} says that if  $\lambda(z_n)/\lambda_{\Omega}(z_n)$ tends to $1$ sufficiently fast, that is, faster than $e^{-4 \domega (z_n,q)}$ tends to $0$, then this  forces $\lambda=\lambda_{\Omega}$.

\begin{remark}
  The error term in (\ref{eq:anew}) is given in terms of the intrinsic hyperbolic geometry of the
  domain $\Omega$.  It is well--known that for many domains (such as smoothly bounded domains or domains with corners) the error bound can also be stated in Euclidean terms if the need
  arises. 
  For instance, in the case of the unit disk $\D:=\{z \in \C \, : \, |z|<1\}$,  the asymptotic condition  (\ref{eq:anew}) takes the explicit form
\begin{equation} \label{eq:anewdisk}
   (1-|z_n|^2) \lambda(z_n) =1+o\left( (1-|z_n|)^2 \right) \,.
 \end{equation}
This special case of Theorem \ref{thm:main1new} is Theorem 2.6 in \cite{BKR}, see also \cite[Theorem 2.7.2]{Marco}. We leave the statements and proofs of such results for other domains to the reader.  
\end{remark}

\begin{remark}[The Schwarz--Pick lemma for hyperbolic derivatives of Beardon and Minda \cite{BeardonMinda2007}]
From the viewpoint of geometric function theory, the special 
 case  that  the conformal pseudometric $\lambda(z) \, |dz|$ in Theorem \ref{thm:main1new}
has the form $$\lambda(z)=\lambda_{\Omega'}(f(z)) \, |f'(z)|$$
for a holomorphic map $f$ from $\Omega$ to some other hyperbolic domain $\Omega'$
is of particular interest. It is not difficult to see -- cf.~Section \ref{sec:beardonminda} for details -- that Theorem \ref{thm:main1new} in this case 
can also be derived from  the  Schwarz--Pick lemma of Beardon and Minda for hyperbolic derivatives, see e.g.~Corollary 11.3 in \cite{BeardonMinda2007} and also \cite{Beardon1997,BeardonMinda2004}. For $\Omega=\Omega'=\D$ this has already been pointed out  in \cite[Remark 5.6]{BKR}, and the general case is analogous. We refer to   \cite{Marco,AW2009, BRW2009, EJLS2014} for some other recent results and variations on the Schwarz--Pick lemma. 
\end{remark}

\begin{remark}[Theorem \ref{thm:main1new} and the boundary Schwarz Lemma of Burns--Krantz] \label{rem:bk}
 Theorem \ref{thm:main1new}  for $\Omega=\D$ and constant curvature $-4$ contains  in particular the following result: \textit{If $f : \D \to \D$ is a holomorphic map such that
\begin{equation} \label{eq:sp23}
  (1-|z_n|^2) \frac{|f'(z_n)|}{1-|f(z_n)|^2} =1+o\left( (1-|z_n|)^2 \right)
  \end{equation}
for some sequence $\{z_n\}$ in $\D$ such that $|z_n| \to 1$, then $f$ is a conformal automorphism of $\D$}, see also \cite{BKR}.
As it has been pointed out in \cite[Remark 2.2]{BKR}, this special case of Theorem \ref{thm:main1new} generalizes the well--known boundary Schwarz lemma of Burns and Krantz \cite{BurnsKrantz1994} which asserts that the condition
\begin{equation} \label{eq:bk23}  f(z)=z+o\left(|1-z|^3\right) \qquad (z \to 1)
\end{equation}
for some holomorphic map $f : \D \to \D$ implies $f(z)=z$. 
Accordingly, already the constant curvature case of Theorem \ref{thm:main1new} provides  an extension of the boundary Schwarz lemma of Burns--Krantz  e.g.~to holomorphic maps between arbitary hyperbolic domains.
We refer to \cite{BaraccoZaitsevZampieri2006,Bol2008,Chelst2001, Dubinin2004, Dubinin2011,LiuTang2016,Osserman2000,Shoikhet2008,TLZ17,TauVla2001,Zimmer2018} for some of the many other recent extensions and generalizations of the theorem of Burns and Krantz. 
A substantial difference between Theorem \ref{thm:main1new} 
and  the boundary Schwarz lemma of \cite{BurnsKrantz1994} is the  conformally
invariant character of Theorem \ref{thm:main1new}.
\end{remark}

We now state the main results of the present paper. First, we show that under some mild topological--geometric assumptions on the domain $\Omega$ at the distinguished boundary point $p$, the error term in Theorem \ref{thm:main1new} is sharp.
Let $K$   be a connected component  of the complement $\hat{\C} \setminus \Omega$  of  a hyperbolic domain $\Omega$ in $\C$. We call $K$ \textit{nondegenerate}, if $K$ does not reduce to a single point. We call the component $K$ \textit{isolated}, if none  of its points belongs to the closure of  $\hat{\C} \setminus (\Omega \cup K)$.
Finally, a point $p \in \partial \Omega \cap \C$ is called \textit{accessible} if there is a continuous curve $\gamma : [0,1) \to \Omega$ such that $\lim_{t \to 1} \gamma(t)=p$.

\begin{theorem}[Sharpness of the strong form of Ahlfors' lemma at the boundary] \label{thm:main2new}
  Let $\Omega$ be a hyperbolic domain in $\hat{\C}$ and let $K$  be an isolated  nondegenerate  connected component
   of $\hat{\C} \setminus \Omega$. Suppose that $p  \in K\cap \partial \Omega$
   is accessible. 
  Then  there exists  a sequence $\{z_n\}$ in $\Omega$ tending to $p$
  and an  injective holomorphic  function $f : \Omega \to \D$ such that
  $$ \lim \limits_{n \to \infty} \left( \frac{\lambda_{\D}(f(z_n)) \, |f'(z_n)|}{\lambda_{\Omega}(z_n)}-1 \right) e^{4 \domega(z_n,q)} <0 \, $$
  for one -- and hence any -- $q \in \Omega$.
  \end{theorem}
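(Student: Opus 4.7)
The idea is to construct $f$ as the composition of a Riemann map of $\hat\C\setminus K$ with an injective holomorphic self--map of $\D$ that is tangent to the identity to second order at the image of $p$, in analogy with the classical sharpness constructions for the Burns--Krantz theorem.

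Since $K$ is a nondegenerate continuum in $\hat\C$, the complement $\hat\C\setminus K$ is simply connected and hyperbolic; let $F:\hat\C\setminus K\to\D$ be a conformal bijection. Because $p\in K$ is accessible from $\Omega\subset\hat\C\setminus K$, the prime--end theorem implies that $F$ extends continuously along the accessibility curve to a boundary point $\hat p\in\partial\D$; after a rotation we may assume $\hat p=1$. Set $\Omega':=F(\Omega)$, so $F:\Omega\to\Omega'$ is a conformal bijection with $\lambda_\Omega=|F'|\cdot\lambda_{\Omega'}\circ F$ and $d_\Omega(z,q)=d_{\Omega'}(F(z),F(q))$; and, because $K$ is \emph{isolated} in $\hat\C\setminus\Omega$, the remaining components of $\hat\C\setminus\Omega$ are mapped by $F$ to a compact subset of $\D$, so $\Omega'$ has smooth boundary at $\hat p$, coinciding locally with $\partial\D$.

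Fix a small $c\in(0,1/12)$ and put $g(w):=w+c(1-w)^3$. Straightforward estimates (using $|g(e^{i\theta})|^2=1-16c\sin^4(\theta/2)+64c^2\sin^6(\theta/2)\le 1$ on $\partial\D$, together with the factorisation $g(w)-g(w')=(w-w')[\,1-c((1-w)^2+(1-w)(1-w')+(1-w')^2)\,]$) show that $g$ is an injective holomorphic self--map of $\D$, not an automorphism, with $g(1)=1$, $g'(1)=1$, $g''(1)=0$. Define $f:=g\circ F|_\Omega$, an injective holomorphic map $\Omega\to\D$. Writing $w:=F(z)$ and $u(w):=\log(\lambda_{\Omega'}(w)/\lambda_\D(w))\ge 0$ on $\Omega'$, conformal invariance yields
\[
\frac{\lambda_\D(f(z))|f'(z)|}{\lambda_\Omega(z)}=\frac{(1-|w|^2)|g'(w)|}{1-|g(w)|^2}\cdot e^{-u(w)}.
\]
A routine Taylor expansion along the radial approach $w=1-\delta$ gives $(1-|w|^2)|g'(w)|/(1-|g(w)|^2)=1-2c\delta^2+O(\delta^3)$; since $u\ge 0$ one has $e^{-u(w)}\le 1$, and monotonicity of the hyperbolic distance $d_{\Omega'}\ge d_\D$ together with the explicit formula for $d_\D$ yields $e^{4d_{\Omega'}(w,w_0)}\ge C(w_0)(1+o(1))\delta^{-2}$ for $w_0:=F(q)$. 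Taking $z_n:=F^{-1}(1-1/n)$, we deduce
\[
\left(\frac{\lambda_\D(f(z_n))|f'(z_n)|}{\lambda_\Omega(z_n)}-1\right)e^{4d_\Omega(z_n,q)}\le -2c\,C(w_0)+o(1),
\]
so passing to a convergent subsequence yields a genuine limit strictly less than $0$.

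The main technical obstacle is showing that the left--hand side is also bounded \emph{below}, so that a convergent subsequence actually exists. This reduces to the estimate $u(w)=O((1-|w|)^2)$ nontangentially at $\hat p$, that is, to smooth--boundary regularity of the solution $u$ of the Gauss curvature equation $\Delta u=4\lambda_\D^2(e^{2u}-1)$ near the smooth arc $\partial\D\cap\partial\Omega'$ through $\hat p$. This follows from the boundary Harnack inequality of Theorem~\ref{lem:harnack} applied to $u$ in a one--sided neighbourhood of $\hat p$, or alternatively from standard Schauder estimates, since $\partial\Omega'$ is real--analytic at $\hat p$.
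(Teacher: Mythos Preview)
Your overall strategy—Riemann map of $\hat\C\setminus K$ composed with a cubic perturbation $g(w)=w+c(1-w)^3$, then approach $p$ radially—is exactly the paper's strategy (the paper uses $\phi(z)=z-\tfrac{1}{12}(z-1)^3$). The upper bound $\limsup\le -\text{const}<0$ is also handled the same way, via $e^{-u}\le 1$ and $d_{\Omega'}\ge d_{\D}$.

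The gap is in your treatment of the lower bound. First, your reduction is incomplete: bounding $A_n B_n$ from below with $A_n<0$ requires not only $|A_n|=O(\delta^2)$ (which would follow from $u=O(\delta^2)$) but also an \emph{upper} bound $B_n=e^{4d_{\Omega'}(w_n,w_0)}=O(\delta^{-2})$; you never address the latter. Second, your two proposed justifications for $u=O(\delta^2)$ do not work as stated. Theorem~\ref{lem:harnack} is a Harnack inequality at an \emph{isolated} boundary point (a puncture); it says nothing about the behaviour of $u$ near a smooth arc of $\partial\D$. And ``standard Schauder estimates'' do not apply directly: the equation $\Delta u=4\lambda_\D^2(e^{2u}-1)$ has the singular coefficient $\lambda_\D^2=(1-|w|^2)^{-2}$, so boundary Schauder theory for uniformly elliptic equations is not available without substantial further work.

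The paper circumvents all of this by an elementary sandwich. Since $K$ is isolated, $\Omega'=\psi(\Omega)$ contains an annulus $A_r=\{r<|z|<1\}$, so $A_r\subseteq\Omega'\subseteq\D$ gives simultaneously $\lambda_\D\le\lambda_{\Omega'}\le\lambda_{A_r}$ and $d_\D\le d_{\Omega'}\le d_{A_r}$. The explicit formula for $\lambda_{A_r}$ then yields $\lambda_{A_r}/\lambda_\D=1+O((1-|w|)^2)$ (hence $u=O(\delta^2)$) and $e^{4d_{A_r}}\asymp\delta^{-2}$; combining these with the Taylor expansion of $(1-|w|^2)|\phi'|/(1-|\phi|^2)$ gives the finite lower bound directly, with no appeal to PDE regularity. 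If you replace your last paragraph by this annulus comparison, your proof becomes essentially the paper's proof.
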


  Theorem  \ref{thm:main2new}
  reveals  that   the error term in Theorem \ref{thm:main1new} is optimal  in the sense that
for any accessible boundary  point $p$ from an isolated nondegenerate  connected  component $K$ of $\hat{\C} \setminus \Omega$  one cannot replace  ``little $o$'' in (\ref{eq:anew})  by ``big $O$''. In contrast to that, 
the next result shows that if the distinguished boundary point is 
\textit{isolated}, then the error term in Theorem \ref{thm:main1new} is not   the best
possible. In fact,  the exponent $4$ in the error term (\ref{eq:anew}) can be reduced to $2$ in this case.

\begin{theorem}[The strong boundary Ahlfors lemma for hyperbolic domains with punctures] \label{thm:3}
  Let $\Omega$ be a hyperbolic domain in $\hat{\C}$  with an isolated boundary point $p$ and let
  $\lambda(z) \, |dz|$ be a conformal pseudometric on $\Omega$ with curvature $\le -4$.
    Suppose that $\{z_n\}$ is a sequence in $\Omega$ tending to $p \in \partial \Omega$. If 
  \begin{equation} \label{eq:b}
 \frac{\lambda(z_n)}{\lambda_{\Omega}(z_n)}=1+ o\left( e^{-2 \domega(z_n,q)} \right) \qquad (n \to \infty) 
 \end{equation}
 for one -- and hence for any -- $q \in \Omega$, then
$$ \lambda=\lambda_{\Omega}\, .$$
  \end{theorem}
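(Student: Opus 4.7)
The plan is to reduce the theorem to a Hopf-type rigidity statement for the Gauss curvature equation on a punctured disk, assembled from three ingredients: localization of the hypothesis to a small punctured neighborhood of $p$, an upgrade from pointwise decay along $\{z_n\}$ to uniform decay on circles around $p$ via the boundary Harnack inequality of Theorem~\ref{lem:harnack}, and a rigidity conclusion via the Hopf lemma of Corollary~\ref{cor:hopf}. The resulting local identity $\lambda = \lambda_\Omega$ will be propagated to all of $\Omega$ by the classical interior rigidity (Heins) for Ahlfors' lemma.

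After a M\"obius change of coordinates on $\hat{\C}$, I may assume $p = 0 \in \C$, and pick $R>0$ with $D^*_R := \{0 < |z| < R\} \subset \Omega$. The classical asymptotic for the Poincar\'e metric at an isolated puncture gives
\[
   \lambda_\Omega(z) \;=\; \frac{1 + o(1)}{2|z|\log(1/|z|)} \quad (z \to 0),
\]
and integrating along radial paths then yields
\[
   \domega(z,q) \;=\; \tfrac{1}{2}\log\log(1/|z|) + O(1), \qquad e^{-2\,\domega(z,q)} \;\asymp\; \frac{1}{\log(1/|z|)} \quad (z \to 0).
\]
Setting $u := \log(\lambda_\Omega / \lambda)$, Ahlfors' inequality (\ref{eq:ahlf1}) gives $u \ge 0$, the curvature bound $\kappa_\lambda \le -4$ translates into the distributional differential inequality
\[
   \Delta u \;\ge\; 4\,\lambda_\Omega^{2}\,\bigl(1 - e^{-2u}\bigr) \quad \text{on } \Omega,
\]
and hypothesis (\ref{eq:b}) reduces to the simple form $u(z_n)\,\log(1/|z_n|) \to 0$.

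The core of the proof then runs as follows. I apply the boundary Harnack inequality of Theorem~\ref{lem:harnack} to the nonnegative subsolution $u$ on the dyadic annuli $\{|z_n|/2 < |z| < 2|z_n|\}$, producing a universal constant $C$ with $\sup_{|z| = |z_n|} u(z) \le C\, u(z_n)$, and therefore
\[
   \sup_{|z| = |z_n|} u(z) \cdot \log(1/|z_n|) \;\longrightarrow\; 0.
\]
Corollary~\ref{cor:hopf}, the accompanying Hopf lemma for the Gauss curvature equation on punctured disks, now forces $u \equiv 0$ throughout a punctured neighborhood of $0$. Since $u$ vanishes on an open subset of $\Omega$, the interior strong form of Ahlfors' lemma extends this to $u \equiv 0$ on all of $\Omega$, which is exactly $\lambda = \lambda_\Omega$.

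The hard part will be the PDE analysis synthesizing Theorem~\ref{lem:harnack} and Corollary~\ref{cor:hopf}: the hypothesis controls $u$ at only a single sequence of points $\{z_n\}$, and the sharp boundary comparison statements tailored to an isolated puncture are needed to bootstrap this into uniform vanishing of $u$ near $p$. The exponent $2$ in (\ref{eq:b})---strictly better than the generic exponent $4$ of Theorem~\ref{thm:main1new}---is precisely the threshold at which this bootstrap succeeds, corresponding to the $1/\log(1/|z|)$--barrier built into the Hopf lemma at an isolated puncture.
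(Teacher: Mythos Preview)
Your overall direction is right---reduce to $p=0$, translate the hypothesis into the Euclidean form $u(z_n)\log(1/|z_n|)\to 0$ via the asymptotic $e^{-2\domega(z,q)}\asymp 1/\log(1/|z|)$ (this is Lemma~\ref{lem:hilfe}), and then invoke Corollary~\ref{cor:hopf}. But the ``core'' step you describe is both unnecessary and not what the cited tools deliver.

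First, Corollary~\ref{cor:hopf} already handles the sequential hypothesis directly: it asserts that if $\lambda\not\equiv\lambda_\Omega$ then
\[
\limsup_{z\to 0}\left(\log\frac{\lambda(z)}{\lambda_\Omega(z)}\right)\log\frac{1}{|z|}<0,
\]
i.e.\ the (nonpositive) quantity is bounded away from $0$ for \emph{all} $z$ near $0$. A single sequence along which it tends to $0$ already gives the contradiction; no upgrade to circles is needed. Moreover Corollary~\ref{cor:hopf} concludes $\lambda\equiv\lambda_\Omega$ globally on $\Omega$, so the subsequent appeal to interior rigidity is redundant.

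Second, Theorem~\ref{lem:harnack} is not a local Harnack inequality on dyadic annuli of the type you invoke. It compares $\log(\lambda/\lambda_\Omega)$ on the punctured disk $0<|z|<r$ to its maximum on the outer circle $|\xi|=r$, via the barrier $1/\log(1/|z|)$; it does not produce an estimate $\sup_{|z|=|z_n|}u(z)\le C\,u(z_n)$ from a single interior value. (Nor could it: with your sign convention $u=\log(\lambda_\Omega/\lambda)\ge 0$, one computes $\Delta u\le 4\lambda_\Omega^2(1-e^{-2u})$, so $u$ is a \emph{supersolution}, and a Harnack bound of the form $\sup\le C\cdot(\text{one value})$ is not available for supersolutions.)

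In short: drop the dyadic-annuli step entirely. The paper's proof is two lines---Lemma~\ref{lem:hilfe} followed by Corollary~\ref{cor:hopf}---and your outline already contains both.
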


  \begin{remark}
The improved  error term in Theorem \ref{thm:3} is optimal in the sense that  one cannot
replace ``little $o$'' in (\ref{eq:b})  by ``big $O$'', see Example \ref{exa:1} below.
\end{remark}

\begin{remark} \label{rem:euclidean}
If $p\in \C$, then
 condition (\ref{eq:b}) takes -- in Euclidean terms -- the form
$$ \frac{\lambda(z_n)}{\lambda_{\Omega}(z_n)}=1+ o\left( \frac{1}{\log \frac{1}{|z_n-p|}} \right) \, ,$$
see Lemma \ref{lem:hilfe}.
\end{remark}

\begin{remark} There is an extensive literature on the asymptotic behaviour of a conformal metric at an isolated singularity,  see \cite{ASVV2010,CW94,CW95,FSX2019,Gil,Heins1962,HT92,KR2008,LLX2020,McO93,Min97,Nit57,Pic1893,Pic1905,Poi1898,Yamada1988,Yamashita1993}. It is customary 
  and no loss of generality to consider the case that the isolated singularity is the point $p=0$  and that $\Omega$ contains the punctured unit disk $\D':=\D \setminus \{0\}$. If we  assume that the curvature $\kappa_{\lambda}$ of the metric $\lambda(z) \, |dz|$ has a continuous extension to $p=0$ with $\kappa(0)<0$, then 
  the well--known fundamental classification result \cite{Nit57,Heins1962,McO93,KR2008} asserts that 
  there is a continuous remainder function $v : \D \to \R$ such that either
  \begin{equation} \label{eq:logsing}
\log \lambda(z)=- \log |z| -\log \log (1/|z|)+ v(z) 
\end{equation}
or there is a real number $\alpha<1$ such that
\begin{equation} \label{eq:conicsing}
\log \lambda(z)=-\alpha  \log |z| + v(z) \, .
\end{equation}
The metric $\lambda(z) \, |dz|$ is said to have a \textit{logarithmic singularity} or a \textit{singularity of order $1$} if (\ref{eq:logsing}) holds and a \textit{conical singularity of order $\alpha<1$} if (\ref{eq:conicsing}) holds. The prototype for a  conformal metric with singularity of order $1$ is the Poincar\'e metric of a domain with an isolated boundary point, see \cite{Ahlfors2010,Hayman1989}.
Note that Theorem \ref{thm:3} is relevant  only for metrics with singularities of order~$1$. Conical singularities
have intensively been studied e.g.~in\cite{ASVV2010,Heins1962,TZ2003,Troyanov1991}. 
    \end{remark}

   Returning to Theorem \ref{thm:3}, we note that under the slightly more restrictive, but customary assumption that the curvature $\kappa_{\lambda}$ is locally H\"older continuous up to the isolated singularity the following  strong dichotomy arises. 
    
    \begin{theorem} \label{thm:gg}
      Let $\kappa : \D \to \R$ be a locally H\"older continuous function and 
      suppose that  $\lambda(z) \, |dz|$ is a conformal metric on $\D'$ with a singularity of order $1$ at $z=0$ and such that $\kappa_{\lambda}(z)=\kappa(z)$ for all $z \in \D'$.
      Then the following hold.
      \begin{itemize}
      \item[(a)] If 
        $\kappa(0)=-4$, then 
        \begin{equation} \label{eq:nitsche2} \log \lambda(z) =\log \lambda_{\D'}(z)+ O \left( \frac{1}{\log \frac{1}{|z|}} \right) \qquad (z \to 0) \, .
          \end{equation}
        \item[(b)] If $\kappa_{\lambda}(z) \le -4$ for all $z \in \D'$ and
          \begin{equation} \label{eq:1}
          \log \lambda(z_n) =\log \lambda_{\D'}(z_n)+o \left( \frac{1}{\log \frac{1}{|z_n|}} \right) \, .
        \end{equation}
        for a sequence $\{z_n\}$ in $\D'$ tending to $0$, then $\lambda=\lambda_{\D'}$.
        \end{itemize}
      \end{theorem}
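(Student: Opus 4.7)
My plan is to handle the two parts separately: part (b) reduces essentially directly to Theorem~\ref{thm:3}, while part (a) is the analytically substantive statement and will rely on the new boundary Harnack-type inequality of Theorem~\ref{lem:harnack}.

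For part (b), I would apply Theorem~\ref{thm:3} to $\Omega=\D'$, for which $p=0$ is an isolated boundary point. Two translations are needed. First, the universal covering $\UH\to\D'$, $\tau\mapsto e^{i\tau}$, together with $\lambda_{\D'}(z)=1/(2|z|\log(1/|z|))$, yields the asymptotic $\domega(z_n,q)=\tfrac12\log\log(1/|z_n|)+O(1)$ as $z_n\to 0$, whence $e^{-2\domega(z_n,q)}\asymp 1/\log(1/|z_n|)$. Second, since the left side of (\ref{eq:1}) tends to $0$, the expansion $\log(1+x)=x+O(x^2)$ upgrades (\ref{eq:1}) to $\lambda(z_n)/\lambda_{\D'}(z_n)=1+o(1/\log(1/|z_n|))$, which is precisely the ratio-form hypothesis (\ref{eq:b}) of Theorem~\ref{thm:3}. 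That theorem then forces $\lambda=\lambda_{\D'}$.

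For part (a), I would introduce $V(z):=\log(\lambda(z)/\lambda_{\D'}(z))$ on $\D'$. Since both $\lambda$ and $\lambda_{\D'}$ have logarithmic singularities of order $1$ at $0$ by (\ref{eq:logsing}), $V$ extends continuously to the full disk $\D$. The Gauss curvature equation gives
\[
\Delta V \;=\; -\lambda_{\D'}^2\bigl(\kappa\, e^{2V}+4\bigr) \qquad \text{on } \D'.
\]
The key normalization is $V(0)=0$. One obtains this by applying elliptic regularity (permitted by the H\"older hypothesis on $\kappa$) to the continuous remainder $v$ in (\ref{eq:logsing}), then passing to the limit $z\to 0$ in the displayed equation: because $\lambda_{\D'}^2(z)\to\infty$, boundedness of $\Delta V$ near $0$ forces the bracketed quantity to vanish at $0$, so $\kappa(0)e^{2V(0)}+4=0$ and hence $V(0)=0$ when $\kappa(0)=-4$. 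With $V(0)=0$ in hand and the H\"older vanishing of $\kappa+4$ at $0$, the right-hand side of the equation for $V$ is quantitatively small near the puncture; Theorem~\ref{lem:harnack} is then expected to convert this vanishing into the precise pointwise bound $V(z)=O(1/\log(1/|z|))$, which is (\ref{eq:nitsche2}). The hardest point, to my mind, is the rigorous justification of $V(0)=0$: (\ref{eq:logsing}) supplies only continuity of $v$, and passing to the limit of $\Delta v$ at $0$ requires upgrading this via elliptic regularity from the H\"older hypothesis on $\kappa$; once this normalization is in place, Theorem~\ref{lem:harnack} can be used as a black box to produce the sharp rate.
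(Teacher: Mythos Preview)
Your treatment of part~(b) is correct and matches the paper's argument: convert the logarithmic condition~(\ref{eq:1}) to the ratio condition~(\ref{eq:b}) via $\log(1+x)=x+o(x)$ and the asymptotic $e^{-2\domega(z_n,q)}\asymp 1/\log(1/|z_n|)$ (the paper packages the latter as Lemma~\ref{lem:hilfe}), then invoke Theorem~\ref{thm:3}.

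Your plan for part~(a), however, has a genuine gap. You propose to use Theorem~\ref{lem:harnack} ``as a black box'' to upgrade $V(0)=0$ to $V(z)=O(1/\log(1/|z|))$, but Theorem~\ref{lem:harnack} has a standing hypothesis $\kappa_\lambda\le -4$ that part~(a) does \emph{not} assume; only $\kappa(0)=-4$ is given. Moreover, even when it applies, Theorem~\ref{lem:harnack} furnishes a one-sided inequality of the form $\log(\lambda/\lambda_\Omega)\le -c/\log(1/|z|)$ (since $\lambda\le\lambda_\Omega$ by Ahlfors), whereas (\ref{eq:nitsche2}) is a two-sided bound. So Theorem~\ref{lem:harnack} alone cannot deliver part~(a), and your proposal does not indicate how you would handle either obstruction.

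The paper's route is different and relies on external input from \cite{KR2008}. First, Theorem~3.5 there gives the continuous remainder $v$ in (\ref{eq:logsing}) with the explicit value $v(0)=-\log\sqrt{-\kappa(0)}$; applied to both $\lambda$ and $\lambda_{\D'}$ this yields $w:=\log(\lambda/\lambda_{\D'})$ continuous on $\D$ with $w(0)=0$. Second---and this is the step your plan lacks a substitute for---Theorem~1.1 in \cite{KR2008} uses the local H\"older continuity of $\kappa$ to produce the gradient bound
\[
|\nabla w(z)|=O\!\left(\frac{1}{|z|\,(\log(1/|z|))^2}\right)\qquad(z\to 0),
\]
and since $\int \frac{dx}{x(\log(1/x))^2}=1/\log(1/x)$, integrating from $0$ with $w(0)=0$ gives $w(z)=O(1/\log(1/|z|))$. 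Thus part~(a) is not a consequence of the new Harnack inequality in this paper but of the earlier regularity results in~\cite{KR2008}; the paper says so explicitly right after the statement of Theorem~\ref{thm:gg}.
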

      
Comparing (\ref{eq:1}) with  (\ref{eq:bk23}),   Theorem \ref{thm:gg} (b)  is clearly a  boundary rigidity result of Burns--Krantz type.
We  note that in fact Theorem \ref{thm:gg} (a)  can be deduced from the results \cite{KR2008}, which generalize earlier results of Nitsche \cite{Nit57} and Heins \cite{Heins1962} dealing with the constant curvature case. On the other hand, Theorem \ref{thm:gg}  (b) follows at once  from Theorem \ref{thm:3} and Remark \ref{rem:euclidean}, but  seems to be new even in the constant curvature case.
      
\begin{remark}[Constant curvature case of Theorem \ref{thm:gg}]
Clearly, the classical case  $\kappa_{\lambda}(z) =-4$ for all $z \in \D'$  in Theorem \ref{thm:gg} has been considered by many authors, including Poincar\'e \cite{Poi1898}, Picard \cite{Pic1893,Pic1905}, Heins \cite{Heins1962}, Yamada \cite{Yamada1988}, Nitsche \cite{Nit57} and Minda \cite{Min97}. In particular, Nitsche \cite{Nit57} has found the expansion

\begin{equation} \label{eq:nitsche1}
  \log  \lambda(z)= \log \lambda_{\D'}(z)+\sum \limits_{l,m,n\ge 0}a_{l,m,n} \frac{z^l \overline{z}^m}{\left( \log \frac{1}{|z|} \right)^n} \, , \qquad a_{0,0,0}=0 \, ,
\end{equation}
which obviously   implies Theorem \ref{thm:gg} (a) in the constant curvature case. In fact, (\ref{eq:nitsche1}) also provides a very weak form of Theorem \ref{thm:gg} (b) in the constant curvature case. In order to see this, one can just insert the expansion (\ref{eq:nitsche1}) into the Gauss curvature equation $\Delta \log \lambda=4 \lambda^2$. Then, as Nitsche has pointed out,  a comparison of  coefficients yields 
 $$ a_{0,0,n}=\frac{\gamma^n}{n} \qquad \text{ for all $n=1,2, \ldots$ and some constant } \gamma \in \R \, .$$
In particular, this shows that 
$$
a_{0,0,1}=0 \quad \Longrightarrow  \quad a_{0,0,n}=0 \quad \text{ for } n=0,1,2, \ldots \, .
$$ However, Theorem \ref{thm:gg} (b) implies that much more is true, namely
 $$ a_{0,0,1}=0 \quad \Longrightarrow  \quad a_{l,m,n}=0 \quad \text{ for all } l,m,n=0,1,2, \ldots \,  .$$
 This is completely  analogous to the 'analytic' form of the boundary Schwarz lemma of Burns and Krantz: if a holomorphic function $f : \D \to \D$
 has a holomorphic extension to $z=1$ of the form
 $$ f(z)=z+\sum \limits_{n=3}^{\infty} a_n (z-1)^n \, ,$$
 then
 $$ a_3=0  \Longrightarrow  \quad a_{n}=0 \quad \text{ for } n=3,4,5, \ldots \, .$$
 \end{remark}

 We finally discuss an analogue of Theorem \ref{thm:main2new} and Theorem \ref{thm:3}  for conformal metrics on $\D'$ with a conical singularity of order $\alpha<1$ at the origin. A well--known version of Ahlfors' lemma (see e.g.~\cite[p.~22]{Heins1962} or \cite[Exercise \S 3.4]{KR2013}) says that for any such pseudometric $\lambda(z) \, |dz|$ with curvature $\le -4$ the inequality
\begin{equation} \label{eq:ahlforsconic}
  \lambda(z) \le \lambda_{\alpha}(z):=(1-\alpha) \frac{|z|^{-\alpha}}{1-|z|^{2 (1-\alpha)}}  \,
  \end{equation}
 holds. Moreover,  equality occurs at some interior point of  $\D'$ if and only if $\lambda \equiv \lambda_{\alpha}$. This, of course,  is analogous to the interior case of equality in the Ahlfors--Schwarz lemma. Since  we have not been able to locate this result elsewhere, we include a proof in Section \ref{par:conic} below, see Lemma \ref{lem:conic}.
In any case, our main point here that there is even the following boundary version of this strong form of the Ahlfors--Schwarz lemma for conformal pseudometrics with conical singularities.

\begin{theorem} \label{thm:4}
  Let   $\lambda(z) \, |dz|$ be a conformal pseudometric on $\D'$
   with curvature $\le -4$ and a conical singularity of order $\alpha<1$ at $z=0$. Suppose that
  \begin{equation} \label{eq:b2}
 \frac{\lambda(z_n)}{\lambda_{\alpha}(z_n)}=1+ o\left( |z_n|^{2 (1-\alpha)} \right)
 \end{equation}
for some sequence $(z_n)$ in $\Omega$ such that $z_n \to 0$. Then
$$ \lambda=\lambda_{\alpha}\, .$$
\end{theorem}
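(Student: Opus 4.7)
The plan is to reduce the theorem to Heins' interior rigidity on $\D$ by uniformizing the conical singularity. The structural observation at the heart of the proof is that $\lambda_\alpha$ is the pullback of $\lambda_\D$ under the multivalued map $z\mapsto z^\beta$ with $\beta=1-\alpha$; restricted to any simply connected slit subdomain of $\D'$, this map is a conformal bijection onto a circular sector in $\D$.

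First I would choose a radial slit $\ell\subset\D'$ from $0$ to $\partial\D$ that is eventually avoided by $\{z_n\}$, and set $\phi(z)=z^\beta$ (principal branch) on $\D'\setminus\ell$; this is a conformal bijection onto the open sector $W=\{w\in\D : 0<|w|<1,\ |\arg w|<\beta\pi\}$. Push $\lambda$ forward to $\mu$ on $W$ by $\mu(w)|dw|=\lambda(\phi^{-1}(w))\,|d\phi^{-1}(w)|$; then $\mu$ has curvature $\le -4$, and the pulled back Ahlfors estimate \eqref{eq:ahlforsconic} reads $\mu\le \lambda_\D|_W$. The conical singularity of $\lambda$ at $0$ makes $\mu$ continuous at the wedge vertex $w=0$, and the asymptotic hypothesis \eqref{eq:b2} forces the boundary value $\mu(0)=\lambda_\D(0)=1$.

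The main step is to show $\mu\equiv \lambda_\D$ on a small disc about $0$. Set $u=\log\lambda_\D-\log\mu\ge 0$. The two curvature (in)equalities $\Delta \log\mu\ge 4\mu^2$ and $\Delta \log\lambda_\D=4\lambda_\D^2$, together with $1-e^{-t}\le t$, yield the linearized inequality $\Delta u\le 8\lambda_\D^2\,u$ on $W$. Since $\lambda$ is a genuine pseudometric on all of $\D'$, it is smooth across the slit $\ell$, and so $\mu$ together with its derivatives glue across the two limiting rays $\arg w=\pm\beta\pi$ of $W$ (which correspond to the two sides of $\ell$). In other words, $\mu$ descends to a smooth pseudometric on the cone of total angle $2\beta\pi$ obtained from $W$ by identifying its two rays. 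Averaging the linearized inequality over $\theta\in[-\beta\pi,\beta\pi]$, the angular boundary terms $(2\beta\pi)^{-1}\bigl[u_\theta(r,\beta\pi)-u_\theta(r,-\beta\pi)\bigr]/r^2$ cancel by the smooth gluing, leaving the radial ODE inequality
\[
u_0''(r)+\frac{u_0'(r)}{r}\le c(r)\,u_0(r),
\]
with $u_0$ the angular mean of $u$ and $c(r)=8\sup_{|w|=r}\lambda_\D(w)^2$ bounded near $r=0$. A standard Gr\"onwall iteration, starting from $u_0(0)=0$ and $u_0\ge 0$ and based on $(ru_0')'\le c\, r u_0$, forces $u_0\equiv 0$ on some interval $[0,r_0]$; since $u\ge 0$ has vanishing angular mean, $u\equiv 0$ on $W\cap B(0,r_0)$. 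Pulling back via $\phi$ gives $\lambda=\lambda_\alpha$ on a nonempty open subset of $\D'$.

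To conclude I would invoke Heins' strong form of Ahlfors' lemma, which applies verbatim with $\lambda_\alpha$ as the reference metric because $\lambda_\alpha$ has constant curvature $-4$ (the argument of Heins only uses that the reference metric solves $\Delta\log = 4(\cdot)^2$, not that it is the full hyperbolic metric): equality $\lambda=\lambda_\alpha$ at one interior point of $\D'$ propagates to equality on all of $\D'$. The main obstacle in this plan is the careful justification of the smooth gluing of $\mu$ on the cone across the identified wedge rays, so that the angular boundary terms in the averaging step really vanish; this rests on the regularity of $\lambda$ as a global pseudometric on $\D'$, and is the one place where the fact that the slit $\ell$ is an auxiliary choice invisible to $\lambda$ is essential.
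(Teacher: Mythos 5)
Your proof is correct, but it takes a genuinely different route from the paper's. The paper derives Theorem~\ref{thm:4} from a boundary Harnack inequality (Theorem~\ref{thm:harnackconic}) and a Hopf-type lemma (Corollary~\ref{cor:hopfconic}), which it proves by a maximum principle on the punctured disk $\{0<|z|<r\}$ using the explicit barrier $v_\alpha(z)=|z|^{2(1-\alpha)}/(1-|z|^{2(1-\alpha)})$, tuned so that $\Delta v_\alpha\ge 8\lambda_\alpha^2 v_\alpha$. You instead uniformize the cone point via $z\mapsto z^\beta$, average the linearized inequality $\Delta u\le 8\lambda_\D^2 u$ over the angular variable, and run a Gr\"onwall iteration on the resulting radial ODE; this replaces the barrier function by the integrability of $\int_0^\rho \frac{1}{t}\int_0^t s\,\lambda_\alpha(s)^2\,ds\,dt$, which holds precisely because the singularity is conical ($\lambda_\alpha(s)\sim s^{-\alpha}$ with $\alpha<1$) and would fail for the logarithmic singularity $\lambda_{\D'}$. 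Two further comments. First, the detour through the slit domain, the sector $W$, and the cone is unnecessary: since $u=\log(\lambda_\alpha/\lambda)$ is already continuous up to $z=0$ and $C^2$ on all of $\D'$, you can average directly on $\D'$ over $\arg z$ and then substitute $s=\rho^{1-\alpha}$ in the resulting radial inequality $(\rho u_0')'\le 8\rho\lambda_\alpha(\rho)^2 u_0$; the ``smooth gluing across the slit'' you flag as the main obstacle never comes up. Second, your Gr\"onwall step uses only $u_0(0^+)=0$, i.e.\ $\lim_{z\to 0}\lambda(z)/\lambda_\alpha(z)=1$, and never invokes the stronger rate $o(|z_n|^{2(1-\alpha)})$ from \eqref{eq:b2}. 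So your argument actually establishes more than what is stated: for conical singularities the mere convergence $\lambda/\lambda_\alpha\to 1$ already forces $\lambda\equiv\lambda_\alpha$, in sharp contrast to the logarithmic case, where Example~\ref{exa:1} shows this weaker condition is not sufficient. This observation is worth making explicit, since it means the $\beta=1$ case of Nitsche's expansion (see the remark following Theorem~\ref{thm:4}) collapses to $\lambda=\lambda_\alpha$, and it explains structurally why the conical apex behaves like an interior point rather than a genuine boundary point.
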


\begin{remark}
Theorem \ref{thm:4}  sharpens  another of Nitsche's results from \cite{Nit57}, which is the following. Suppose that is  $\lambda(z) \, |dz|$ is a conformal metric on $\D'$ with a conical singularity of order $\alpha<1$ at $z=0$ and constant curvature $-4$. Then there is real number $\beta \le 1$ such that 
$$\frac{\lambda(z)}{\lambda_{\alpha}(z)}=\beta+ O\left( |z|^{2 (1-\alpha)} \right) \qquad (z \to 0) \, .$$
In particular, we see that for $\beta=1$ the same strong dichotomy arises as in the case of logarithmic singularities (see  Theorem \ref{thm:gg}). 
  \end{remark}

\section{Preliminaries} \label{sec:prelims}

In this section we briefly recall the notions and results of hyperbolic geometry we need in the paper. We refer the reader to the authorative standard texts such as \cite{Marco,Ahlfors2010,BeardonMinda2007,KeenLakic2007} for further details.

By a conformal pseudometric $\lambda(z) \, |dz|$ on a domain $G$ in $\C$ we simply mean a continuous nonnegative function $\lambda : G \to \R$. If, in addition,  $\lambda$ is  strictly positive on $G$, then $\lambda(z) \, |dz|$ is called conformal metric on $G$.  For simplicity, we assume throughout this paper that any conformal pseudometric  $\lambda(z) \, |dz|$ is of class $C^2$ on the open set $G_{\lambda}:=\{z \in G \, : \, \lambda(z)>0\}$. This allows us to define the Gauss curvature of $\lambda(z) \, |dz|$ by 
\begin{equation} \label{def:curvature}
 \kappa_{\lambda}(z):=-\frac{\Delta (\log \lambda)(z)}{\lambda(z)^2} \, , \qquad z \in G_{\lambda} \, 
\end{equation}
just using the  standard Laplacian $\Delta$.
Expressions such as ``$\kappa \le -4$'' will always mean that
$\kappa(z) \le -4$ for all $z \in G_{\lambda}$. We note that there are various ways to generalize Gauss curvature (see \cite{Ahlfors1938,Heins1962,Royden1986}), but we do not pursue these extensions in this paper.

Let $D$ and $G$ be  domains in $\C$,
$\lambda(w) \, |dw|$  a conformal pseudometric on $G$, and $f : D \to G$  a holomorphic function. Then the conformal pseudometric
$$ (f^*\lambda)(z) \, |dz|:=\lambda(f(z)) \, |f'(z)| \, |dz$$
is called the pullback of $\lambda(w) \, |dw|$ via $f$. The  fundamental property of Gauss curvature is its conformal invariance as expressed by
\begin{equation} \label{eq:curvaturedef}
  \kappa_{f^*\lambda}(z)=\kappa_{\lambda}(f(z)) \, , \qquad z \in G_{f^*\lambda} \, .
  \end{equation}
Using local coordinates and the basic invariance property (\ref{eq:curvaturedef})  one can generalize all these concepts in a straightforward way from subdomains of $\C$ to arbitrary Riemann surfaces, that is, in particular to subdomains of the Riemann sphere, see e.g.~\cite{Heins1962}.

The hyperbolic metric of the unit disk $\D$ is defined by 
 $$ \lambda_{\D}(z)\, |dz|:=\frac{|dz|}{1-|z|^2}\, ; $$
its Gauss curvature is $\kappa_{\lambda_{\D}}=-4$.
 If $G$ is a hyperbolic subdomain of  $\hat{\C}$, then uniformization theory shows that there is  a holomorphic universal covering map $\pi : \D \to G$, and one can define the hyperbolic metric $\lambda_{G}(z) \, |dz|$ by
 $$ \lambda_{G}(\pi(z)) \, |\pi'(z)| \, |dz|:=\lambda_{\D}(z) \, |dz| \, .$$
 The metric  $\lambda_{G}(z) \, |dz|$ does not depend on the choice of the covering $\pi : \D \to G$ and has constant Gauss curvature $-4$. The Schwarz--Pick lemma or the Ahlfors--Schwarz inequality (\ref{eq:ahlf1}) yields  the basic \textit{domain monotonicity principle}, which says that $\lambda_{D} \le \lambda_G$ whenever $G \subseteq D$. The hyperbolic metric $\lambda_{G}(z) \, |dz|$ induces  the hyperbolic distance $\text{d}_G$ on $G$  defined by
 $\text{d}_G(z,w):=\inf \{ \int_{\gamma} \lambda_G(w) \, |dw| \}$, 
 where the inf is taken over all piecewise smooth curves $\gamma$ in $G$ joining $z \in G$ and $w \in G$.
Then $(G,\text{d}_G)$ is a complete metric space.

\section{Proofs} \label{sec:proofs}

\subsection{Proof of Theorem \ref{thm:main1new}} \label{sec:main1new}

It is relatively straightforward to deduce Theorem \ref{thm:main1new} from its special case $\Omega=\D$, which has previously been established in \cite[Theorem 2.1]{BKR}. For completeness, we provide the details.

Fix a point $q \in \Omega$ and let
$\pi : \D \to \Omega$ be a universal holomorphic covering map with $\pi(0)=q$. For every
positive integer  $n$ there is a curve $\gamma_n$ in $\Omega$ from $q$ to $z_n$
such that
$$ \domega(z_n,q) \ge \int \limits_{\gamma_n} \lambda_{\Omega}(z) \,
|dz|-\frac{1}{n} \,.$$
Since $\pi$ has the pathlifting property there is a curve $\tilde{\gamma}_n$
in $\D$ from $0$ to some  point $w_n\in \D$ in the fibre $\pi^{-1}(\{z_n\})$ of $z_n$
such that $\pi \circ \tilde{\gamma}_n=\gamma_n$. Hence, using
\begin{equation} \label{eq:hypmettrans}
\lambda_{\Omega}(\pi(w)) \, |\pi'(w)|=\lambda_{\D}(w) \,  \qquad (w \in \D) \, ,  
\end{equation}
we see 
\begin{equation} \label{eq:we1}
 \domega(z_n,q) \ge \int \limits_{\pi \circ \tilde{\gamma}_n}
\lambda_{\Omega}(z) \, |dz|-\frac{1}{n} =\int \limits_{\tilde{\gamma}_n}
\lambda_{\D}(w) \, |dw|-\frac{1}{n} \ge 
\dD(w_n,0)-\frac{1}{n}  \, .
\end{equation}
 
On the other hand, by the distance nonincreasing property of $\pi : (\D,\dD) \to (\Omega,\domega)$, we have
\begin{equation} \label{eq:we2}
  \domega(z_n,p)=\domega(\pi(w_n),\pi(0)) \le \dD(w_n,0)\, .
\end{equation}

Since
$$\dD(w,0)=\frac{1}{2} \log
\frac{1+|w|}{1-|w|} \, , \qquad w \in \D \, , $$
the two estimates (\ref{eq:we1}) and (\ref{eq:we2}) immediately give
\begin{equation} \label{eq:estimate1}
  \lim \limits_{n \to \infty} \frac{e^{-2 \domega(z_n,q)}}{1-|w_n|}=\frac{1}{2}  \, .
  \end{equation}

  The pullback $\mu(w)\, |dw|:=\pi^*\lambda(w) \, |dw|$ of $\lambda(z) \, |dz|$ by the map $\pi: \D \to \Omega$ is a conformal pseudometric on $\D$ with curvature $\le -4$.  Using (\ref{eq:hypmettrans}), we get
  $$ \frac{\mu(w_n)}{\lambda_{\D}(w_n)}=\frac{\lambda(z_n)}{\lambda_{\Omega}(z_n)}=1+o\left(
    e^{-4 \domega(z_n,q)} \right)$$
by assumption. In view of (\ref{eq:estimate1}) this yields
  $$\frac{\mu(w_n)}{\lambda_{\D}(w_n)}=1+o \left( (1-|w_n|)^2\right)\, .$$
 Since clearly $|w_n| \to 1$, we are thus in a position to apply Theorem 2.1 in \cite{BKR}, and this shows that
  $\mu=\lambda_{\D}$ which is the same as $\lambda=\lambda_{\Omega}$.

\subsection{Theorem \ref{thm:main1new} and the Schwarz--Pick lemma of A.~Beardon and D.~Minda} \label{sec:beardonminda}

Let $\Omega, \Omega'$ be hyperbolic domains in $\C$ and $f : \Omega \to \Omega'$ a holomorphic map. Beardon \& Minda \cite{BeardonMinda2007} call the quantity
$$ f^{\Omega,\Omega'}(z):=\frac{\lambda_{\Omega'}(f(z)) \, |f'(z)|}{\lambda_{\Omega}(z)}$$
the hyperbolic distortion factor of $f$ at $z$. They have proved the following sharpening of the classical Schwarz--Pick inequality $f^{\Omega,\Omega'} \le 1$, see \cite[Corollary 11.3]{BeardonMinda2007}:
\begin{equation} \label{eq:bm}
  f^{\Omega,\Omega'}(z) \le \frac{f^{\Omega,\Omega'}(q) +\tanh \left(2 \text{d}_{\Omega}(z,q)\right)}{1+f^{\Omega,\Omega'}(q) \tanh \left(2 \text{d}_{\Omega}(z,q)\right)} \,
  \end{equation}
for all $z,q \in \Omega$. Rearranging this inequality and setting $z=z_n$  yields
$$ f^{\Omega,\Omega'}(q) \ge  \frac{ e^{4 \domega(z_n,q)} \left( f^{\Omega,\Omega'}(z_n)-1 \right)+f^{\Omega,\Omega'}(z_n)+1}{{- e^{4 \domega(z_n,q)} \left( f^{\Omega,\Omega'}(z_n)-1 \right)+f^{\Omega,\Omega'}(z_n)+1}} \, .$$
Hence, assuming
$$ \frac{\lambda_{\Omega'}(f(z_n)) \, |f'(z_n)|}{\lambda_{\Omega}(z_n)}
  =f^{\Omega,\Omega'}(z_n)=1+e \left( e^{-4 \domega(z_n,q)} \right) \qquad (n \to \infty) $$
for a sequence $\{z_n\}$ in $\Omega$ escaping to the boundary of $\Omega$   as in (\ref{eq:anew}),  shows that $f^{\Omega,\Omega'}(q)=1$. By the case of equality of the classical Schwarz--Pick lemma at the interior  point $q \in \Omega$ (see \cite[Theorem 10.5]{BeardonMinda2007}), we deduce that $f:\Omega\to \Omega'$ is a covering, so
  $$  \lambda_{\Omega'}(f(z)) \, |f'(z)| =\lambda_{\Omega}(z) \quad \text{ for all } z \in \Omega \, .$$
  We see that the special case  $\lambda=\lambda_{\Omega'}(f) \, |f'|$ for some holomorphic map $f : \Omega \to \Omega'$ in  Theorem \ref{thm:main1new} is an immediate consequence of the sharpened Schwarz--Pick inequality (\ref{eq:bm}).

\subsection{Proof of Theorem \ref{thm:main2new}}
Since the closed set $K$ is connected and contains more than one point, $G :=\hat{\C} \setminus K$ is a simply connected domain which contains $\Omega$. 
Hence, by domain monotonicity of the hyperbolic metric and hyperbolic distance,  $\lambda_{G} \le \lambda_{\Omega}$ and $\text{d}_G \le \domega$. Now fix $q \in \Omega$ and let $\psi$ be a conformal map of $G$ onto $\D$ with
$\psi(q)=0$. Since $p$ is an accessible boundary point of  $\Omega$ there is a continuous curve $\gamma : [0,1) \to \Omega$ such that $\lim_{t \to 1} \gamma(t)=p$.
It is well--known (\cite[Proposition 3.3.3]{BCDbook}) that
there exists a point $\sigma \in \partial \D$ such that $\lim_{t \to 1} \psi(\gamma(t))=\sigma$
and such that the radial limit $$\lim_{r \to 1} \psi^{-1}(r \sigma)$$ exists
and $=p$. By rotating $\psi$ if
necessary, we may  assume 
$$ p=\lim \limits_{r \to 1} \psi^{-1}(r) \, .$$
We now consider 
$$\phi(z):=z-\frac{1}{12} (z-1)^3 \, , \quad z \in \D \, . $$
Note that $\phi$ provides  the standard example for demonstrating the sharpness of the error term in the theorem of Burns--Krantz for the unit disk, see \cite{BurnsKrantz1994,EJLS2014}.
It is not difficult to show that $\phi$ is an injective holomorphic selfmap of $\D$, and a direct computation shows that for $x \in (0,1)$,
\begin{equation} \label{eq:phi}
(1-x^2) \, \left(\phi^*\lambda_{\D}\right)(x)=\left(1-x^2\right) \frac{|\phi'(x)|}{1-|\phi(x)|^2}=1-\frac{(1-x)^2}{12}+o\left( (1-x)^2 \right) \quad \text{ as } x \to 1\, .
\end{equation}
We define
$$ f:=\phi \circ \psi\, ,$$
 a (univalent) holomorphic map from  $G$ to $\D$. In view of $\lambda_{\Omega} \ge
\lambda_{G}$ and $\lambda_G=\psi^*\lambda_{\D}$, we have
\begin{equation} \label{eq:est2}
\begin{array}{rcl}
\displaystyle   \frac{\lambda_{\D}(f(w)) \, |f'(w)|}{\lambda_{\Omega}(w)} &\le & 
 \displaystyle \frac{\lambda_{\D}(f(w)) \,
   |f'(w)|}{\lambda_{G}(w)}=\frac{\lambda_{\D}(\phi(\psi(w)) \,
   |\phi'(\psi(w))|}{\lambda_{\D}(\psi(w))}\\ &=&\displaystyle \left(1-|z|^2\right) \frac{|\phi'(z)|}{1-|\phi(z)|^2}
\end{array}
\end{equation}
 for all $w=\psi^{-1}(z) \in \Omega$. 

 Since $K$ is an isolated component of $\hat{\C} \setminus \Omega$, the curve
 $x \mapsto \psi^{-1}(x)$, $x \in [0,1)$, eventually lies in $\Omega$.
In view of (\ref{eq:phi}) and (\ref{eq:est2})  we get 
 $$ \frac{\lambda_{\D}(f(w)) \, |f'(w)|}{\lambda_{\Omega}(w)}  \le
 (1-x^2) \frac{|\phi'(x)|}{1-\phi(x)^2}=1-\frac{(1-x)^2}{12}+o \left(
   (1-x)^2 \right) \qquad (x \to 1) \, .$$
In combination with $\text{d}_G \le \domega$, this leads to
 \begin{eqnarray*}
 \left(  \frac{\lambda_{\D}(f(w)) \, |f'(w)|}{\lambda_{\Omega}(w)} -1
 \right) e^{4 \domega(w,q)} &\le &   \left(  \frac{\lambda_{\D}(f(w)) \, |f'(w)|}{\lambda_{\Omega}(w)} -1
                                     \right) e^{4 \text{d}_G(w,q)}\\ &=& 
     \left(  \frac{\lambda_{\D}(f(w)) \, |f'(w)|}{\lambda_{\Omega}(w)} -1
                                                                           \right) e^{4 \dD(x,0)}\\
&=&      \left(  \frac{\lambda_{\D}(f(w)) \, |f'(w)|}{\lambda_{\Omega}(w)} -1
                                                                           \right)
    \left( \frac{1+x}{1-x} \right)^2\\
    & \le & -\frac{(1+x)^2}{12}+o(1) \qquad \text{ as } x \to 1 \, .
 \end{eqnarray*}
 Hence
 \begin{equation} \label{eq:lim1}
 \limsup \limits_{w \to p} \left(  \frac{\lambda_{\D}(f(w)) \, |f'(w)|}{\lambda_{\Omega}(w)} -1
   \right) e^{4 \domega(w,q)} \le -\frac{1}{3}  \, ,
   \end{equation}
 where this limit is taken along the curve $\psi^{-1}([0,1))$.
 
 We now provide an estimate in the opposite direction. Let $\Omega':=\psi(\Omega)$. Notice that
since $K$ is an isolated component of $\hat{\C} \setminus \Omega$  we can choose an annulus $A_r=\{z \in \C \, :
 \, r<|z|<1\}$ such that $A_{r} \subseteq \Omega' \subseteq \D$.
 Since $\psi^{-1}$ is a holomorphic map from $A_r$ to $\Omega$ the Schwarz--Pick lemma and the monotonicity property of the hyperbolic metric show that
 $$ \lambda_{\Omega}(\psi^{-1}(z)) |(\psi^{-1})'(z)| \le \lambda_{\Omega'}(z) \le \lambda_{A_r}(z) \, , \qquad z \in A_r \, .$$ 
This implies for any $z\in A_r$ and  $w:=\psi^{-1}(z) \in  \Omega$,
\begin{equation} \label{eq:est3}
 \begin{array}{rcl}
\displaystyle  \left(  \frac{\lambda_{\D}(f(w)) \, |f'(w)|}{\lambda_{\Omega}(w)} -1
   \right) e^{4 \domega(w,q)} &\ge & \displaystyle  \left(  \frac{\lambda_{\D}(f(w)) \, |f'(w)|}{\lambda_{\Omega'}(z)\, |\psi'(w)|} -1
   \right) e^{4 \domega(w,q)}\\
   &\ge &  \displaystyle\left(  \frac{\lambda_{\D}(f(w)) \, |f'(w)|}{\lambda_{A_r}(z)\, |\psi'(w)|} -1
 \right) e^{4 \domega(w,q)}\\[5mm] &=&  \displaystyle\left(  \frac{\phi^*\lambda_{\D}(z)}{\lambda_{A_r}(z)} -1
 \right) e^{4 \domega(\psi^{-1}(z),\psi^{-1}(0))} \\[5mm] &\ge & \displaystyle
\left(  \frac{\phi^*\lambda_{\D}(z)}{\lambda_{A_r}(z)} -1
 \right) e^{4 \text{d}_{A_r}(z,0)}
\end{array}
\end{equation}
where we have used again the Schwarz--Pick lemma in the last step.

We now make use of
the well--known (see e.g.~\cite{BeardonMinda2007,Sugawa1998}) explicit formula
$$ \lambda_{A_r}(z)=\frac{\pi}{2 |z| \log \left(\frac{1}{r}\right) \, \sin \left( \pi \frac{\log
      \frac{1}{|z|}}{\log \frac{1}{r}} \right)}\, ,$$
which in particular implies
$$ \text{d}_{A_r}(z,0) \sim -\frac{\log \left(\log \frac{1}{|z|}\right)}{2} \quad \text{ as } |z| \to 1 \, .$$

A tedious computation involving these expressions and the explicit formula (\ref{eq:phi}) for $\phi^*\lambda_{\D}$ yields
$$ \lim \limits_{x \to 1} \left(  \frac{\phi^*\lambda_{\D}(x)}{\lambda_{A_r}(x)} -1
 \right) e^{4 \text{d}_{A_r}(x,0)}=-\frac{\pi ^2}{6 \left(\log \left(\frac{1}{r}\right)\right)^2}-\frac{1}{3} \, .$$
In view of (\ref{eq:est3})  we hence get
 \begin{equation} \label{eq:lim1}
 \liminf \limits_{w \to p} \left(  \frac{\lambda_{\D}(f(w)) \, |f'(w)|}{\lambda_{\Omega}(w)} -1
   \right) e^{4 \domega(w,q)} \ge -\frac{1}{3}-\frac{\pi ^2}{6 \left(\log \left(\frac{1}{r}\right)\right)^2}  \, ,
   \end{equation}
   where this limit is taken along the curve $\psi^{-1}([0,1))$. In combination with the previous inequality (\ref{eq:lim1}), we see that there is a sequence $\{z_n\}$ of points on $\psi^{-1}([0,1])$ tending to $p$ such that the limit
  $$  \lim \limits_{n \to \infty} \left(  \frac{\lambda_{\D}(f(z_n)) \, |f'(z_n)|}{\lambda_{\Omega}(z_n)} -1
  \right) e^{4 \domega(z_n,q)} $$
  exists and is $<0$.
The proof of Theorem \ref{thm:main2new} is complete.
 
\subsection{Proof of Theorem \ref{thm:3}}

It is well--known that the boundary Schwarz lemma of Burns and Krantz is closely tied to a Harnack inequality for positive subharmonic functions (see \cite{BurnsKrantz1994,Ransford1995}). It is therefore to be expected 
that the key ingredient for the proof of Theorem \ref{thm:3} ought to be a Harnack--type inequality for negatively
curved conformal pseudometrics
on domains with isolated boundary points.

\begin{theorem}[Harnack inequality] \label{lem:harnack}
  Let $\Omega$ be a hyperbolic domain in $\C$ with $z=0$ as an isolated
  boundary point and let   $\lambda(z) \, |dz|$ be a  conformal pseudometric  with curvature
$\kappa_{\lambda} \le -4$ on $\Omega$.
Then for any $0<r<R:=\dist(0,\partial \Omega \setminus\{0\})$, 
\begin{equation} \label{eq:harnack}
\lambda(z) \le \left( \max \limits_{|\xi|=r}  \frac{\lambda(\xi)}{\lambda_{\Omega}(\xi)} \right)^{C_{r,R}(z)} \cdot \lambda_{\Omega}(z) \quad \text{ on } \quad 0<|z|<r \, ,
  \end{equation} 
  where
  $$C_{r,R}(z)=\frac{\log (r/R)}{\log (|z|/R)}<1\, .$$ 
\end{theorem}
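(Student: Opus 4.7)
My plan is to prove the Harnack inequality by constructing an explicit majorant metric $\mu$ on the punctured disc $D := \{0<|z|<r\}$ that satisfies $\lambda \le \mu$ on the boundary of $D$ (in the appropriate limsup sense) and has curvature $\ge -4$ on $D$; the classical generalized Ahlfors--Schwarz comparison principle will then force $\lambda \le \mu$ throughout $D$, which is precisely the claim. Set $M := \max_{|\xi|=r}\lambda(\xi)/\lambda_\Omega(\xi)$, which lies in $[0,1]$ by Ahlfors' lemma. The degenerate case $M=0$ follows immediately from Hadamard's three-circles theorem applied to the subharmonic function $\log\lambda$ (yielding $\lambda \equiv 0$ on $D$), so I assume $M>0$ and define
$$\mu(z) := M^{C_{r,R}(z)}\,\lambda_\Omega(z).$$

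The boundary inequalities are immediate. On $|\xi|=r$ we have $C_{r,R}(\xi)=1$, so $\mu(\xi)=M\lambda_\Omega(\xi)$ and $\lambda(\xi)/\mu(\xi)\le 1$ by the definition of $M$. As $|z|\to 0$, $C_{r,R}(z)\to 0$ and hence $\mu(z)/\lambda_\Omega(z)\to 1$; combined with Ahlfors' inequality $\lambda\le\lambda_\Omega$, this gives $\limsup_{z\to 0}\lambda(z)/\mu(z) \le 1$.

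The main computational step is to verify $\Delta\log\mu \le 4\mu^2$ on $D$. Writing $G := 1/C_{r,R} = \log(|z|/R)/\log(r/R)$, which is harmonic on $\C\setminus\{0\}$, one has $\Delta(1/G) = 2|\nabla G|^2/G^3$, and a direct calculation yields the key identity
$$\frac{|\nabla G(z)|^2}{G(z)^2} = \frac{1}{|z|^2\log(R/|z|)^2} = 4\,\lambda_{D_R}(z)^2,$$
where $D_R := \D(0,R)\setminus\{0\}$ has Poincar\'e density $\lambda_{D_R}(z)=1/(2|z|\log(R/|z|))$ in the curvature $-4$ normalization used throughout. Since $D_R\subseteq\Omega$, domain monotonicity yields $\lambda_\Omega \le \lambda_{D_R}$. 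Expanding
$$\Delta\log\mu = 4\lambda_\Omega^2 + (\log M)\cdot\frac{2|\nabla G|^2}{G^3},$$
the inequality $\Delta\log\mu \le 4\mu^2 = 4M^{2/G}\lambda_\Omega^2$, after division by $\log M \le 0$, reduces to
$$\frac{|\nabla G|^2}{G^2} \ge 4\,\lambda_\Omega^2\cdot\frac{e^y-1}{y}, \qquad y := \frac{2\log M}{G}\le 0.$$
Since $(e^y-1)/y \in (0,1]$ for $y \le 0$, the identity $|\nabla G|^2/G^2 = 4\lambda_{D_R}^2 \ge 4\lambda_\Omega^2$ closes the estimate.

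With the boundary condition and $\kappa_\mu \ge -4$ in hand, the Ahlfors comparison principle applied to $w := \log(\lambda/\mu)$---which satisfies $\Delta w \ge 4\mu^2(e^{2w}-1)$ on $\{\lambda>0\}$ and consequently cannot attain a strictly positive interior maximum in $D$---forces $\lambda \le \mu$ on $D$, establishing the Harnack inequality. I expect the real difficulty to lie not in the comparison step but in hitting upon the precise form of the majorant: the decisive algebraic point is that the harmonic function $G = \log(|z|/R)/\log(r/R)$ produces exactly $|\nabla G|^2/G^2 = 4\lambda_{D_R}^2$, the constant $4$ matching the curvature $-4$ normalization, so that the inequality $\lambda_\Omega \le \lambda_{D_R}$ coming from domain monotonicity is precisely what is needed to push the required curvature inequality for $\mu$ through.
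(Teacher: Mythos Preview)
Your proof is correct and is, in substance, the same argument as the paper's: both hinge on the auxiliary function $C_{r,R}(z)=\log(r/R)/\log(|z|/R)$ (the paper writes it as $v(z)=1/\log(1/|z|)$ after normalizing $R=1$), the identity $|\nabla G|^2/G^2=4\lambda_{D_R}^2$ (equivalently $\Delta v=8\lambda_{\D'}^2 v$), the domain monotonicity $\lambda_\Omega\le\lambda_{D_R}$, and a maximum--principle argument for the very same function $w=\log(\lambda/\mu)=\log(\lambda/\lambda_\Omega)-(\log M)\,C_{r,R}$. The only difference is packaging: you verify the nonlinear curvature bound $\kappa_\mu\ge -4$ for the majorant $\mu=M^{C_{r,R}}\lambda_\Omega$ and then invoke the Ahlfors comparison, whereas the paper first linearizes via $e^{2y}-1\ge 2y$ to obtain $\Delta u\ge 8\lambda_\Omega^2 u$ and observes that $v$ is a supersolution of the same linear operator; your inequality $(e^y-1)/y\le 1$ is exactly this linearization in disguise.
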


In the proof of Theorem \ref{lem:harnack} we shall make use of the following auxiliary result

\begin{lemma} \label{lem:435}
  Let $\lambda(z) \, |dz|$ be a conformal pseudometric with curvature $\kappa_{\lambda} \le -4$ on a domain $\Omega \subseteq \C$.
  Then either $\lambda \equiv 0$ or $\{z \in \Omega \, : \, \lambda(z)=0\}$ is a set of capacity zero. 
\end{lemma}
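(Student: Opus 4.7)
The plan is to reinterpret $u := \log \lambda$, extended by the value $-\infty$ on the zero set $\{z \in \Omega : \lambda(z)=0\}$, as a subharmonic function on $\Omega$, and then to invoke the classical potential-theoretic fact that the $-\infty$-locus of a subharmonic function that is not identically $-\infty$ is a polar set, hence has logarithmic capacity zero.

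Having disposed of the trivial case $\lambda \equiv 0$, I would first verify that $u$ is upper semicontinuous on $\Omega$. On $G_\lambda$ it is continuous by the assumed $C^2$-regularity, and at any point $z_0$ with $\lambda(z_0)=0$, the continuity of $\lambda$ forces $\lambda(z) \to 0$ and hence $u(z) \to -\infty = u(z_0)$, so $u$ is in fact continuous as a map $\Omega \to [-\infty,\infty)$.

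Next I would verify the local sub-mean-value inequality at every point of $\Omega$. On $G_\lambda$ the standard computation
\[
  \Delta u \;=\; \Delta \log \lambda \;=\; -\kappa_\lambda \, \lambda^2 \;\ge\; 4\lambda^2 \;\ge\; 0,
\]
which uses only that $\kappa_\lambda \le -4$ and $\lambda \in C^2(G_\lambda)$, shows that $u$ is classically subharmonic on the open set $G_\lambda$; so for any $z_0 \in G_\lambda$ the inequality $u(z_0) \le \tfrac{1}{2\pi}\int_0^{2\pi} u(z_0+re^{i\theta})\, d\theta$ holds for all disks $\overline{D(z_0,r)} \subset G_\lambda$. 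At any $z_0$ with $\lambda(z_0)=0$ we have $u(z_0) = -\infty$, so the sub-mean-value inequality is vacuously true there. Combined with upper semicontinuity this gives subharmonicity of $u$ on $\Omega$, and since $G_\lambda$ is assumed non-empty, $u \not\equiv -\infty$. The classical theorem then identifies $\{u=-\infty\} = \{\lambda=0\}$ as a set of logarithmic capacity zero, which is the desired dichotomy.

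The argument is largely mechanical once the correct viewpoint is adopted; the only place where one might anticipate trouble is at the boundary $\partial G_\lambda \cap \Omega$, across which no PDE regularity of $\lambda$ is available. The crucial observation that side-steps this difficulty is that sub-mean-value is a local pointwise condition, and at any point of the zero set it is automatic since $u=-\infty$ there; no analysis of the transition between $G_\lambda$ and its complement, and no smoothing of $\lambda$ near its zeros, is therefore needed.
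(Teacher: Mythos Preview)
Your proposal is correct and follows essentially the same approach as the paper: both show that $u=\log\lambda$ is subharmonic on all of $\Omega$ (using $\Delta\log\lambda\ge 4\lambda^2\ge 0$ on $G_\lambda$ and $u=-\infty$ on the zero set) and then invoke the classical fact that the $-\infty$-set of a subharmonic function not identically $-\infty$ is polar. The paper's proof is terser, simply asserting subharmonicity, whereas you spell out the verification of upper semicontinuity and the local sub-mean-value inequality and explicitly address why the transition across $\partial G_\lambda$ causes no difficulty.
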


\begin{proof} Since $\lambda$ is continuous and nonnegative on $\Omega$ and of class $C^2$ in $G_{\lambda}=\{z \in \Omega \, : \, \lambda(z)>0\}$ with   $\Delta \log\ge 4 \lambda^2 \ge 0$ there, it follows that
 $\log \lambda$  is subharmonic on $\Omega$ (possibly  $\log \lambda \equiv -\infty$). Hence either $\log \lambda \equiv -\infty$ or $\{z \in \Omega\, : \, \log \lambda(z)=-\infty\}$ is a set of capacity zero.
\end{proof}

\begin{proof}[Proof of Theorem \ref{lem:harnack}] It suffices to consider the case $R=1$, so $\D' \subseteq \Omega$.
  The function
  $$u(z):=\log \frac{\lambda(z)}{\lambda_{\Omega}(z)} \, $$
  is $C^2$--smooth and nonpositive on $G_{\lambda}=\{z \in \Omega \, : \, \lambda(z)>0\}$ by Ahlfors' lemma. Since $\Delta \log \lambda \ge 4 \lambda^2$ and
  $\Delta \log \lambda_{\Omega}=4 \lambda_{\Omega}^2$, the elementary
  inequality $e^{2y}-1 \ge 2 y$ which is valid for all $y \in \R$ yields
  $$ \Delta u \ge 4 \lambda_{\Omega}(z)^2 \left( e^{2 u}-1 \right) \ge 8
  \lambda_{\Omega}(z)^2 u \quad \text{ in } G_{\lambda} \, .$$
  We now define the auxiliary function
  $$ v(z):=\frac{1}{\log \frac{1}{|z|}} \, \quad z \in \D' \, , $$
  and observe by direct inspection
  $$\Delta v=8 \lambda_{\D'}(z)^2 v \, .$$
By domain monotonicity of the Poincar\'e metric, we hence get from $\D' \subseteq \Omega$ that
$$ \Delta v   \ge 8 \lambda_{\Omega}(z)^2 v  \quad \text{
    in } \, \D'\, .$$
  We next fix $r \in (0,1)$ and define
  $$ \eps:=-\frac{1}{v(r)} \max \limits_{|\xi|=r} u(\xi)  . \, $$
  Note that we can assume $\max_{|\xi|=r} u(\xi)>-\infty$, since otherwise $\log \lambda(\xi)=-\infty$ for all $|\xi|=r$  which, by  Lemma \ref{lem:435}, would force $\log \lambda  \equiv -\infty$ in which case (\ref{eq:harnack})
holds anyway. Hence $\eps \not=+\infty$. 
  Since $v$ is positive and $u \le 0$, we have $\eps \ge 0$.
We now consider  $$w:=u+\eps v \, .$$
Then, by construction,
  \begin{equation} \label{eq:bdd22}
 \Delta w \ge 8 \lambda_{\Omega}(z)^2 w \, \quad \text{ in } \quad G_\lambda
 \cap \D'  \, ,
 \end{equation}
  and
  \begin{equation} \label{eq:bdd23}
    w(z) \le 0 \quad \text{ for all } |z|=r \, .
    \end{equation}
 Since
  $$ \lim \limits_{z \to 0} v(z)=0 \, , $$
  we also have
  \begin{equation} \label{eq:bdd24}
    \limsup \limits_{z \to 0} w(z) \le 0 \, .
    \end{equation}
We now show that this implies
  \begin{equation} \label{eq:55}
w(z) \le 0 \quad \text{ for all } 0<|z| \le r \, .
\end{equation}
To prove this, we assume that $w$ is positive at some point inside $0<|z| \le
r$. In view of the boundary conditions (\ref{eq:bdd23}) and (\ref{eq:bdd24}),
the function  $w$ then attains its positive maximum on $0<|z| \le r$  at some point $z_0$
with $0<|z_0|<r$. In particular, $z_0 \in G_{\lambda}$ and hence  $w$ is of class $C^2$ in
a neighborhood of $z_0$,  so $\Delta w (z_0) \le 0$ by elementary calculus. In
view of (\ref{eq:bdd22}), this however yields
$$ 0 \ge \Delta w(z_0) \ge 8 \lambda_{\Omega}(z_0)^2 w(z_0)>0 \, ,$$
a contradiction. Hence, we have proved (\ref{eq:55}), which is equivalent to (\ref{eq:harnack}).
\end{proof}

\begin{corollary}[Hopf] \label{cor:hopf}
 Let $\Omega$ be a hyperbolic domain in $\C$ with $z=0$ as an isolated
  boundary point and let   $\lambda(z) \, |dz|$ be a  conformal pseudometric  with curvature
$\kappa_{\lambda} \le -4$ on $\Omega$. Then either $\lambda \equiv
\lambda_{\Omega}$ or
  \begin{equation} \label{eq:hopf}
\limsup \limits_{z \to 0} \left( \log \frac{\lambda(z)}{\lambda_{\Omega}(z)}\right)  \cdot \log \frac{1}{|z|}<0 \, .
\end{equation}
  \end{corollary}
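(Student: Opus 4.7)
The strategy is to derive the Hopf--type inequality directly from the logarithmic form of the Harnack inequality in Theorem \ref{lem:harnack}, using the interior strong form of the Ahlfors lemma to produce a strict defect on a reference circle.

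Assume $\lambda \not\equiv \lambda_\Omega$ (otherwise there is nothing to prove), set $R := \dist(0, \partial \Omega \setminus \{0\})$, and fix an auxiliary radius $r \in (0, R)$. The first step I would carry out is to show that
$$M_r := \max_{|\xi|=r} \log \frac{\lambda(\xi)}{\lambda_\Omega(\xi)}$$
satisfies $-\infty < M_r < 0$. For the strict upper bound, Ahlfors' inequality gives $\lambda(\xi)/\lambda_\Omega(\xi) \le 1$, and the interior strong form of the Ahlfors lemma (due to Heins) rules out equality anywhere on $\Omega$ under our standing assumption, so $\log(\lambda/\lambda_\Omega) < 0$ pointwise on $\Omega$; upper semicontinuity together with compactness of the circle then force the maximum to be attained and strictly negative. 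For finiteness from below, Lemma \ref{lem:435} shows that the subharmonic function $\log \lambda$ cannot be identically $-\infty$ on the circle $\{|\xi|=r\}$ unless $\lambda \equiv 0$ on $\Omega$, in which case (\ref{eq:hopf}) holds trivially since $\log(\lambda/\lambda_\Omega) \equiv -\infty$.

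Next I would take logarithms in the Harnack estimate of Theorem \ref{lem:harnack} to obtain
$$\log \frac{\lambda(z)}{\lambda_\Omega(z)} \le \frac{\log(r/R)}{\log(|z|/R)} \, M_r \qquad (0 < |z| < r),$$
multiply through by $\log(1/|z|) > 0$, and invoke the elementary asymptotic
$$\lim_{|z| \to 0} \frac{\log(1/|z|)}{\log(|z|/R)} = -1,$$
which follows at once from $\log(|z|/R) = -\log(1/|z|) - \log R$. Passing to the $\limsup$ as $z \to 0$ then yields
$$\limsup_{z \to 0} \left( \log \frac{\lambda(z)}{\lambda_\Omega(z)} \right) \log \frac{1}{|z|} \le M_r \log(R/r) < 0,$$
since $M_r < 0$ and $\log(R/r) > 0$.

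I do not anticipate any serious obstacle in implementing this plan; the only mildly delicate point is the verification that $M_r$ is strictly negative rather than merely non-positive, which is precisely where the strong interior rigidity of Ahlfors--Heins enters. The Harnack inequality then does all the heavy lifting: its multiplicative control $\lambda/\lambda_\Omega \le (\text{circle max})^{C_{r,R}(z)}$ translates via the logarithm into exactly the $1/\log(1/|z|)$ scaling demanded by the Hopf-type bound (\ref{eq:hopf}).
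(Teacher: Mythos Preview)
Your proposal is correct and follows essentially the same approach as the paper: invoke the interior strong form of the Ahlfors--Schwarz lemma to obtain strict inequality $\lambda<\lambda_\Omega$ on all of $\Omega$ (hence $M_r<0$), then feed this into the logarithmic form of the Harnack inequality of Theorem~\ref{lem:harnack}. The paper's proof is stated more tersely (it simply asserts that (\ref{eq:hopf}) ``follows immediately from (\ref{eq:harnack})''), but the quantitative bound $\limsup \le M_r\log(R/r)$ you derive is exactly what the paper records in the remark following the proof.
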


\begin{proof}
  As we have already noted, the well--known  case of equality at some
  interior point in the
Ahlfors--Schwarz lemma shows that $\lambda(z)<\lambda_{\Omega}(z)$ for all $z \in
\Omega$ provided that $\lambda(z_0)<\lambda_{\Omega}(z_0)$ for one $z_0 \in
\Omega$. Hence
(\ref{eq:hopf}) follows immediately from (\ref{eq:harnack}). 
\end{proof}

\begin{remark} An inspection of the argument above shows $$ \limsup \limits_{z \to 0} \log
  \frac{\lambda(z)}{\lambda_{\Omega}(z)}\cdot \log \frac{1}{|z|} \le \inf
\limits_{0 \le r \le R} \left(\max
\limits_{|\xi|=r} \log \frac{\lambda(\xi)}{\lambda_{\Omega}(\xi)} \right)
\cdot \log
\frac{R}{r} \, 
$$
where $R=\dist(0,\partial \Omega \setminus \{0\})$.
  \end{remark}

\begin{remark} \label{rem:123}
We comment briefly on the choice of the auxiliary function
$$v(z)=\frac{1}{\log \frac{1}{|z|}}$$
in the proof of Theorem \ref{lem:harnack}. Using polar
coordinates,  it is straightforward to verify that  the set  of  all  \textit{radially
  symmetric solutions} of the linear PDE
$$ \Delta v=8 \lambda_{\D'}(z)^2 v$$
is a vector space which is generated by the two linearly independent solutions
\begin{equation} \label{eq:rad}
 \frac{1}{\log \frac{1}{|z|}} \, , \qquad  \left( \log \frac{1}{|z|} \right)^2
 \, .
\end{equation}
Among these solutions the auxiliary function is the one which stays bounded as $z \to 0$.
\end{remark}

For the proof of Theorem \ref{thm:3}  we need another lemma.

\begin{lemma} \label{lem:hilfe}
  Let $\Omega \subseteq \C$ be a hyperbolic domain with an isolated boundary point  at $p=0$.
  Then for any $0<R<\dist(0, \partial \Omega \setminus \{0\})$ and any $q \in \Omega$ with $0<|q|<R$ there are constants $c_1,c_2>0$ such that
  $$ c_1 \le \left( \log  \frac{1}{|z|} \right) e^{-2 \text{d}_{\Omega}(z,q)}  \le c_2 \quad \text{ for all } 0<|z| \le R \, .$$ 
\end{lemma}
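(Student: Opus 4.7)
The plan is to establish the matching sharp asymptotic
\[
\domega(z,q) = \tfrac{1}{2}\log\log(1/|z|)+O(1)\qquad\text{as }|z|\to 0,
\]
from which both bounds follow immediately: the quantity $(\log(1/|z|))\,e^{-2\domega(z,q)}$ is continuous and strictly positive on $\{0<|z|\le R\}$, so the bounds on any annulus $\{r\le |z|\le R\}$ with $r>0$ are automatic by compactness, and only the behaviour as $|z|\to 0$ requires analysis.

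For the upper bound on $\domega(z,q)$, I would fix $R_1$ with $R<R_1<\dist(0,\partial\Omega\setminus\{0\})$ so that $\D'_{R_1}:=\{0<|w|<R_1\}\subseteq\Omega$. The Schwarz--Pick lemma applied to the inclusion yields $\lambda_\Omega\le\lambda_{\D'_{R_1}}$ on $\D'_{R_1}$, whence $\domega(z,q)\le d_{\D'_{R_1}}(z,q)$. The punctured disc $\D'_{R_1}$ has universal cover $w\mapsto R_1 e^{2\pi i w}$ from $\UH$ with deck group $\langle w\mapsto w+1\rangle$; a lift of $z$ in the fundamental strip $\{0\le\Re w<1\}$ satisfies $\Im w=(2\pi)^{-1}\log(R_1/|z|)$, and the explicit formula for distance in $\UH$ under the curvature $-4$ normalization yields $d_{\D'_{R_1}}(z,q)=\tfrac{1}{2}\log\log(1/|z|)+O(1)$ by direct calculation.

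For the lower bound, I would use a universal covering $\pi\colon\UH\to\Omega$ with Fuchsian deck group $\Gamma$, normalized so that the parabolic $\gamma_0(w)=w+1$ generates the stabilizer of the cusp at $\infty$ lying over the puncture $0$. Invariance of $\pi$ under $\gamma_0$ yields a factorization $\pi(w)=\tilde\pi(e^{2\pi i w})$ with $\tilde\pi$ holomorphic on $\D'$ and $\tilde\pi(\zeta)\to 0$ as $\zeta\to 0$, so Riemann's removable singularity theorem extends $\tilde\pi$ to $\D$ with $\tilde\pi(0)=0$, and $\tilde\pi'(0)\ne 0$ because $\pi$ is unramified and $\gamma_0$ primitive in $\Gamma$. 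Hence for $|z|$ small, every lift of $z$ sitting over $\infty$ has $\Im\tilde z=(2\pi)^{-1}\log(1/|z|)+O(1)$. Fix a lift $q^*$ of $q$ maximizing $\Im$ over the orbit $\Gamma q^*$---this maximum is finite because $|q|$ is bounded away from $0$, so no lift of $q$ enters the cuspidal horocyclic neighborhood $\{\Im w>T_0\}$ of $\infty$ for some $T_0$ depending only on $q$---and set $M:=\Im q^*$. Given any lift $\tilde z$ of $z$, write $\tilde z=\gamma\tilde z'$ with $\tilde z'$ sitting over $\infty$ and $\gamma\in\Gamma$; the $\Gamma$-equivariance of $d_\UH$ gives $d_\UH(\tilde z,q^*)=d_\UH(\tilde z',\gamma^{-1}q^*)$, and since $\Im(\gamma^{-1}q^*)\le M$ while $\Im\tilde z'\gg M$ for $|z|$ small, the horocyclic separation estimate
\[
d_\UH(\tilde z',\gamma^{-1}q^*)\ge\tfrac{1}{2}\log(\Im\tilde z'/M)=\tfrac{1}{2}\log\log(1/|z|)-O(1)
\]
applies uniformly in $\gamma$. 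Taking the infimum over lifts of $z$ produces the lower bound on $\domega(z,q)$.

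The main obstacle is extracting the sharp coefficient $\tfrac{1}{2}$ in the lower bound. A naive integration of $\lambda_\Omega$ based only on the classical puncture asymptotic $\lambda_\Omega(z)\sim 1/(2|z|\log(1/|z|))$ would produce at best a coefficient $(1-\eta)/2$ for each $\eta>0$, which forces $(\log(1/|z|))\,e^{-2\domega(z,q)}$ to blow up like $(\log(1/|z|))^\eta$; it is only through the universal cover together with the horocyclic comparison---made tight by the choice of $q^*$ maximizing $\Im$, so that no orbit point of $q$ sits above the separating horocycle---that the exact constant $\tfrac{1}{2}$ emerges.
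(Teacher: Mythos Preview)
Your argument is correct. The upper bound is essentially the same as the paper's: both sandwich $\Omega$ above a punctured disk and compute there. The lower bound, however, is genuinely different. The paper sandwiches $\Omega$ inside the twice--punctured plane $\C''=\C\setminus\{0,1\}$ (after a harmless normalization) and then quotes the explicit estimate
\[
\text{d}_{\C''}(z,q)>\tfrac{1}{2}\log\frac{|\log|z||}{\pi+|\log|q||}
\]
from Hayman's book; domain monotonicity then gives the lower bound on $\domega$ directly, with no uniformization needed.

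Your route via the Fuchsian model of $\Omega$ itself is more self--contained and more geometric: it makes transparent where the constant $\tfrac12$ comes from (the vertical--geodesic estimate $d_{\UH}(w_1,w_2)\ge\tfrac12\log(\Im w_1/\Im w_2)$ in curvature $-4$) and avoids invoking an external reference. The paper's route is shorter on the page precisely because it outsources the work to that reference. Two small points you might tighten in a final write--up: you only need $\sup_{\gamma\in\Gamma}\Im(\gamma q^*)<\infty$, not that the supremum is attained, and this finiteness is exactly the statement that the $\Gamma$--orbit of $q^*$ avoids a horoball at $\infty$ (a consequence of $|q|>0$ together with $\tilde\pi'(0)\neq0$); and ``sitting over $\infty$'' should be read as ``lying in the fundamental strip $\{0\le\Re w<1\}$'', which makes the decomposition $\tilde z=\gamma\tilde z'$ unambiguous.
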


\begin{proof}
  This is essentially well--known. We can assume $\D' \subseteq \Omega \subseteq \C'':=\C \setminus \{0,1\}$, so
  $\text{d}_{\C''} (z,q) \le \domega(z,q) \le \text{d}_{\D'}(z,q)$ for all $z,q \in \D'$. Then 
  \begin{eqnarray*}
   \domega(z,q) & \le&   \text{d}_{\D'}(z,q)  \le   \text{d}_{\D'}\left(z,z \frac{|q|}{|z|}\right)+\gamma \quad \text{ with } \gamma:=\max \limits_{|w|=|q|} \text{d}_{\D'}(w,q) \\
                        &=&\frac{1}{2} \left| \log \frac{\log |z|}{\log |q|} \right|+\gamma \, ,
 \end{eqnarray*}
 as can be seen by a direct computation using the explicit expression
 $$ \lambda_{\D'}(z)=\frac{1}{2 |z| \log \frac{1}{|z|}} \, .$$
 On the other hand, we have
 $$ \domega(z,q) \ge \text{d}_{\C''}(z,q) >\frac{1}{2} \log  \frac{|\log |z||}{\pi+|\log |q||} \, ,$$
see \cite[(9.4.21)]{Hayman1989}. The claim follows if we set 
 $$ c_1:=|\log |q|| \cdot e^{-2 \gamma} \,  , \qquad c_2:=|\log |q||+\pi \, .$$
  \end{proof}

We can now give the 
\begin{proof}[Proof of Theorem \ref{thm:3}]
  By Lemma \ref{lem:hilfe},  condition (\ref{eq:b}) is equivalent to 
  $$ \lim \limits_{n \to \infty}  \left( \log \frac{\lambda(z_n)}{\lambda_{\Omega}(z_n)}\right)  \cdot \log \frac{1}{|z_n|}=0 \, . $$ 
Hence  Corollary \ref{cor:hopf} forces $\lambda=\lambda_{\Omega}$.
\end{proof}

\subsection{Sharpness of Theorem \ref{thm:3}}

The following example illustrates that the error term in Theorem \ref{thm:3} is optimal.

\begin{example} \label{exa:1}
Let $f : \D' \to \D'$ be defined by
$$f(z):=z \exp \left(-\frac{1+z}{1-z} \right) \, .$$
Then
\begin{equation} \label{eq:sharp1}
 \lim\limits_{z \to 0} \left( \frac{\lambda_{\D'}(f(z)) \,
     |f'(z)|}{\lambda_{\D'}(z)}-1 \right) \log \frac{1}{|z|} =-1 \, .
 \end{equation}
Hence  we see that we cannot replace ``little o'' by ``big O'' in Theorem \ref{thm:3}.
In order to prove (\ref{eq:sharp1}), we just note that
$$ f'(z)=\frac{1-4z+z^2}{(1-z)^2} \frac{f(z)}{z} \, ,$$
and hence
$$ \frac{\lambda_{\D'}(f(z)) \,
  |f'(z)|}{\lambda_{\D'}(z)}=\frac{|1-4z+z^2|}{|1-z|^2} \frac{\log
  \frac{1}{|z|}}{\log \frac{1}{|f(z)|}}= \frac{|1-4z+z^2|}{|1-z|^2} \frac{\log
  \frac{1}{|z|}}{\log \frac{1}{|z|}+\frac{1-|z|^2}{|1-z|^2}}\, .$$
This implies 
$$\left(  \frac{\lambda_{\D'}(f(z)) \,
  |f'(z)|}{\lambda_{\D'}(z)}-1 \right) \log \frac{1}{|z|}=\left( \frac{|1-4z+z^2|}{|1-z|^2}
\frac{1}{1+\frac{1}{\log \frac{1}{|z|} \frac{1-|z|^2}{|1-z|^2}}}-1 \right)
\log \frac{1}{|z|}\to -1 
$$
as $z \to 0$.
\end{example}

\subsection{Proof of Theorem \ref{thm:gg}} \label{par:gg}

We first prove part (a). By Theorem 3.5 in \cite{KR2008}, we have
$$ \log \lambda(z)=-\log |z|-\log \log (1/|z|)+w(z) \, , \qquad z\in \D', $$
with a continuous function $v : \D \to \R$ satisfying $v(0)=-\log \sqrt{-\kappa(0)}=-\log 2$, and the same conclusion holds for $\log \lambda_{\D'}$. Hence
$$ \log  \lambda(z)=\log \lambda_{\D'}(z)+w(z) \, , \qquad z\in \D',$$
with a continuous functions $w : \D \to \R$ satisfying $w(0)=0$. Now, under the assumption that $\kappa$ is locally H\"older continuous  in a neighborhood of $z=0$, Theorem 1.1 in \cite{KR2008} controls the growth of  the gradient $\nabla w$ of $w$ as follows
$$ \nabla w(z)=O \left( \frac{1}{|z| \left( \log \frac{1}{|z|} \right)^2} \right) \qquad (z \to 0) \, .$$
In combination with $w(0)=0$ and
$$ \int \frac{dx}{x \left( \log \frac{1}{x} \right)^2}=\frac{1}{\log \frac{1}{x}} \, , $$
an elementary integration therefore gives
$$ w(z)= O \left( \frac{1}{\log \frac{1}{|z|} } \right) \qquad (z \to 0) \, ,$$
which proves part (a).

To prove (b) we just note that $ \log (1+x)=x +o(x)$  as $x \to 0$. In view of Lemma \ref{lem:hilfe},
 condition (\ref{eq:1}) therefore implies   condition (\ref{eq:b}) of  Theorem \ref{thm:3}, and part (b) follows from Theorem \ref{thm:3}.

\subsection{Proof of Theorem \ref{thm:4}} \label{par:conic}

The proof of Theorem \ref{thm:4} is more or less identical  to the proof of Theorem \ref{thm:3}, so we only indicate the main steps. The only real difference is that the 'working horse' is now the auxiliary function
$$ v_{\alpha}(z):=\frac{|z|^{2(1-\alpha)}}{1-|z|^{2(1-\alpha)}} \, .$$
However, unlike in the case of a logarithmic singularity (see Remark \ref{rem:123}), we do not have any convincing explanation for this particular choice. Frankly speaking, we have found $v_{\alpha}$ by  educated guessing.

We begin, however, with an analogue  of the interior case of the strong  Ahlfors--Schwarz lemma for the situation of conformal pseudometrics with conical singularities.

\begin{lemma} \label{lem:conic}
Let   $\lambda(z) \, |dz|$ be a regular conformal pseudometric  with curvature
$\kappa_{\lambda} \le -4$ on $\D'$ and a conical singularity at $z=0$ of order $\alpha \in (0,1)$.
Suppose that
$$ \lambda(z_0)=\lambda_{\alpha}(z_0) \quad \text{ for some } z_0 \in \D' \, . $$
Then
$$ \lambda(z)=\lambda_{\alpha}(z) \quad \text{ for all } z \in \D' \, .$$
  \end{lemma}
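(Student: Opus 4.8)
The plan is to mimic the interior case of the classical Ahlfors--Schwarz lemma, reducing the conical situation to the standard one by passing to the covering induced by $z \mapsto z^{1-\alpha}$ (or, more precisely, by comparing $\lambda$ with the explicit extremal metric $\lambda_\alpha$). First I would observe that $\lambda_\alpha(z)\,|dz| = (1-\alpha)\frac{|z|^{-\alpha}}{1-|z|^{2(1-\alpha)}}\,|dz|$ is precisely the pullback of the hyperbolic metric of $\D'$ under the proper holomorphic map $w = z^{1-\alpha}$ up to the factor accounting for the branching, so it has constant curvature $-4$; indeed a direct computation gives $\Delta \log \lambda_\alpha = 4\lambda_\alpha^2$ away from the origin. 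Hence the function
$$ u(z) := \log \frac{\lambda(z)}{\lambda_\alpha(z)} $$
is well defined and of class $C^2$ on $G_\lambda \cap \D'$, and by the version of Ahlfors' lemma recalled in (\ref{eq:ahlforsconic}) it satisfies $u \le 0$ there.

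Next I would run the standard maximum-principle argument. Using $\Delta \log \lambda \ge 4\lambda^2$ (from $\kappa_\lambda \le -4$) together with $\Delta \log \lambda_\alpha = 4\lambda_\alpha^2$ and the elementary inequality $e^{2y}-1 \ge 2y$, one gets
$$ \Delta u \ge 4\lambda_\alpha(z)^2\bigl(e^{2u}-1\bigr) \ge 8\lambda_\alpha(z)^2\, u \quad \text{ in } G_\lambda \cap \D'\, . $$
If $\lambda(z_0) = \lambda_\alpha(z_0)$ for some $z_0 \in \D'$, then $u$ attains its maximum value $0$ at the interior point $z_0 \in G_\lambda$, so $\Delta u(z_0) \le 0$; combined with the differential inequality above this gives $0 \ge \Delta u(z_0) \ge 8\lambda_\alpha(z_0)^2 u(z_0) = 0$, which by itself does not yet yield a contradiction. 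To conclude I would invoke the strong maximum principle for the operator $L := \Delta - 8\lambda_\alpha^2$ (which is elliptic with nonnegative zeroth-order coefficient $8\lambda_\alpha^2 \ge 0$): since $u \le 0 = u(z_0)$ and $Lu \ge 0$ on the connected open set $G_\lambda \cap \D'$, the strong maximum principle forces $u \equiv 0$ on the connected component of $G_\lambda \cap \D'$ containing $z_0$. A short continuity/connectedness argument, using Lemma \ref{lem:435} to ensure that $G_\lambda$ is $\D'$ with a set of capacity zero removed (hence $\D'$ itself once we know $\log\lambda \not\equiv -\infty$, which holds since $\lambda(z_0)>0$), then propagates the equality $u \equiv 0$ to all of $\D'$, giving $\lambda = \lambda_\alpha$ on $\D'$.

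The one delicate point I would flag is the passage from the weak conclusion $\Delta u(z_0) \le 0$ to the genuine identity $u \equiv 0$: a pointwise second-derivative test alone is not enough, so one really needs Hopf's strong maximum principle, and one must be careful that it applies on the punctured disk, where $\lambda_\alpha$ blows up at the origin. This is not a real obstacle, because the argument is entirely local around $z_0 \in \D'$ and then spread by connectedness — the singularity at $0$ never enters, exactly as in the interior case of the classical Ahlfors--Schwarz lemma — but it is the step that deserves care in the write-up. Everything else (the curvature computation for $\lambda_\alpha$, the inequality $e^{2y}-1\ge 2y$, the continuity of $\lambda$) is routine and already appears in the proof of Theorem \ref{lem:harnack}.
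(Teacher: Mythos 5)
Your proof takes the same route as the paper: introduce $u=\log(\lambda/\lambda_\alpha)$, use $\Delta\log\lambda\ge4\lambda^2$, $\Delta\log\lambda_\alpha=4\lambda_\alpha^2$ and $e^{2y}-1\ge 2y$ to obtain $\Delta u\ge 8\lambda_\alpha^2 u$ on $G_\lambda$, and apply Hopf's strong maximum principle for $L=\Delta-8\lambda_\alpha^2$ at the interior maximum $z_0$ to get $u\equiv 0$ on the component $D$ of $G_\lambda$ containing $z_0$. You also correctly identify that the singularity at the origin is irrelevant because the argument is local — that is exactly the observation that drives the paper's proof, where it appears in the form ``$\lambda_\alpha$ is locally bounded on $D$''.

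Two small points deserve correction. First, in your side remark, $\lambda_\alpha$ is the pullback of the hyperbolic metric $\lambda_{\D}$ of the \emph{unit disk} (not of $\D'$) under $z\mapsto z^{1-\alpha}$; the pullback of $\lambda_{\D'}$ under that map is $\lambda_{\D'}$ itself. This is harmless, since all you use is $\Delta\log\lambda_\alpha=4\lambda_\alpha^2$, which you verify directly. Second, and more substantively, your last step — propagating $u\equiv 0$ from $D$ to all of $\D'$ — is not justified the way you phrase it. Lemma \ref{lem:435} only says that $\{\lambda=0\}$ has capacity zero when $\lambda\not\equiv 0$; that does not make $G_\lambda$ equal to $\D'$, since a capacity-zero set need not be empty, so the parenthetical ``hence $\D'$ itself'' does not follow. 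What does work is the continuity argument you allude to but do not carry out: if $\zeta\in\partial D\cap\D'$, then $\zeta\notin G_\lambda$ (a component of an open set contains none of its boundary), so $\lambda(\zeta)=0$; yet by continuity $\lambda(\zeta)=\lim_{D\ni z\to\zeta}\lambda_\alpha(z)=\lambda_\alpha(\zeta)>0$, a contradiction, so $D$ is closed in $\D'$ and hence equals $\D'$. The paper sidesteps this issue entirely by organizing the proof as a local dichotomy — every $z_0\in\D'$ has a neighborhood on which either $\lambda<\lambda_\alpha$ or $\lambda=\lambda_\alpha$ — and then concluding from connectedness of $\D'$, which is slightly cleaner and avoids invoking Lemma \ref{lem:435} at all.
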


\noindent
  The proof is a standard application of the standard Hopf maximum principle, see~\cite[Thm.~3.5]{GT97}; we include it for completeness.

 \begin{proof}
    We  show that any point $z_0 \in \D'$ has an open neighborhood such that either $\lambda=\lambda_{\alpha}$ in this neighborhood or $\lambda(z)<\lambda_{\alpha}(z)$ in this neighborhood. Since $\D'$ is connected, the conclusion of the lemma follows.     Fix $z_0 \in \D'$. If $\lambda(z_0)<\lambda_{\alpha}(z_0)$, then $\lambda(z)<\lambda_{\alpha}(z)$ for all $z$ in some open neighborhood of $z_0$ just by continuity of $\lambda$ and $\lambda_{\alpha}$. Now let $\lambda(z_0)=\lambda_{\alpha}(z_0)>0$, so $z_0 \in G_{\lambda}=\{z \in \D' \, : \, \lambda(z)>0\}$.
 The function
  $$u(z):=\log \frac{\lambda(z)}{\lambda_{\alpha}(z)} \, $$
  is $C^2$--smooth  on $G_{\lambda}$.
  Since $\Delta \log \lambda \ge 4 \lambda^2$ and
  $\Delta \log \lambda_{\alpha}=4 \lambda_{\alpha}^2$, the elementary
  inequality $e^{2y}-1 \ge 2 y$ which is valid for all $y \in \R$ yields
  $$ \Delta u \ge 4 \lambda_{\alpha}(z)^2 \left( e^{2 u}-1 \right) \ge
  8
  \lambda_{\alpha}(z)^2 u \quad \text{ in } G_{\lambda} \, .$$
  Let $D$ denote the connected component of the open set $G_{\lambda}$  which contains the point $z_0$. Since $u(z_0)=0$ and  $u(z) \le 0$ for all $z \in \D'$ by (\ref{eq:ahlforsconic}), the function $u$ attains its maximum on $D$ at $z_0$. In view of the fact that $\lambda_{\alpha}$ is locally bounded on $D$, we are in a position to 
apply the  Hopf maximum principle. We  conclude that  $u$ has to be constant
in $D$, so $\lambda(z)=\lambda_{\alpha}(z)$ for all $z \in D$.
    \end{proof}
  
\begin{theorem}[Harnack inequality for conical singularities] \label{thm:harnackconic}
Let   $\lambda(z) \, |dz|$ be a regular conformal pseudometric  with curvature
  $\kappa_{\lambda} \le -4$ on $\D'$ and a conical singularity at $z=0$ of order $\alpha \in (0,1)$.
  Suppose that
  $$ \lim \limits_{z \to 0} \frac{\lambda(z)}{\lambda_{\alpha}(z)}=1 \, .$$
Then for any $0<r<1$,
\begin{equation} \label{eq:harnacknew}
\lambda(z) \le \left( \max \limits_{|\xi|=r}  \frac{\lambda(\xi)}{\lambda_{\alpha}(\xi)} \right)^{\gamma_{\alpha,r}(z)} \cdot \lambda_{\alpha}(z) \quad \text{ on } \quad 0<|z|<r \, ,
  \end{equation} 
  where
  $$\gamma_{\alpha,r}(z)=\frac{v_{\alpha}(z)}{v_{\alpha}(r)} <1\, .$$ 
\end{theorem}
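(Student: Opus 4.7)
The plan is to transplant the proof of Theorem \ref{lem:harnack} essentially verbatim to the conical setting, with the hyperbolic metric of a punctured domain replaced by the model conical metric $\lambda_{\alpha}$ and the auxiliary function $1/\log(1/|z|)$ replaced by the guessed function $v_{\alpha}$. The starting point is the same: set
$$u(z) := \log \frac{\lambda(z)}{\lambda_{\alpha}(z)},$$
which is of class $C^{2}$ on $G_{\lambda} := \{z \in \D' \, : \, \lambda(z) > 0\}$ and nonpositive on all of $\D'$ by the Ahlfors-type inequality (\ref{eq:ahlforsconic}). Arguing exactly as in the proof of Theorem \ref{lem:harnack}, the curvature hypothesis $\kappa_{\lambda} \le -4 = \kappa_{\lambda_{\alpha}}$ combined with the elementary inequality $e^{2y}-1 \ge 2y$ yields
$$\Delta u \ge 8 \lambda_{\alpha}(z)^{2} u \quad \text{in } G_{\lambda}.$$

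The main technical step -- and the only point where genuinely new information must be extracted -- is to verify by a direct polar-coordinate computation that $v_{\alpha}$ is positive on $\D'$ with $v_{\alpha}(z) \to 0$ as $z \to 0$, and satisfies the differential inequality
$$\Delta v_{\alpha} \ge 8 \lambda_{\alpha}(z)^{2} v_{\alpha} \quad \text{on } \D'.$$
Setting $\beta := 2(1-\alpha)$ and $r := |z|$, both the radial Laplacian of $v_{\alpha}$ and the quantity $8 \lambda_{\alpha}^{2} v_{\alpha}$ carry the common factor $\beta^{2} r^{\beta - 2}/(1-r^{\beta})^{3}$, and the claimed inequality reduces to the triviality $1 + r^{\beta} \ge 2 r^{\beta}$, which holds for $r < 1$. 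I expect this verification to be the only genuine obstacle; the guess $v_{\alpha}$ is pulled out of a hat, and one has to check by hand that it does the job.

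With both inequalities in hand I fix $r \in (0,1)$ and set $M_{r} := \max_{|\xi|=r} u(\xi)$ and $\eps := -M_{r}/v_{\alpha}(r)$. By Lemma \ref{lem:435}, $M_{r} > -\infty$ (otherwise $\lambda \equiv 0$ on $\D'$, contradicting $\lambda/\lambda_{\alpha} \to 1$), so $\eps \in [0,\infty)$. The comparison function $w := u + \eps v_{\alpha}$ then satisfies $\Delta w \ge 8 \lambda_{\alpha}^{2} w$ on $G_{\lambda} \cap \{0 < |z| < r\}$, $w \le 0$ on $\{|z| = r\}$, and -- precisely via the hypothesis $\lambda(z)/\lambda_{\alpha}(z) \to 1$ together with $v_{\alpha}(z) \to 0$ -- also $\limsup_{z \to 0} w(z) \le 0$. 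The maximum-principle argument from the proof of Theorem \ref{lem:harnack} now transfers verbatim: a positive interior maximum of $w$ at some $z_{0}$ would force $z_{0} \in G_{\lambda}$ and $\Delta w(z_{0}) \le 0$, contradicting $\Delta w(z_{0}) \ge 8 \lambda_{\alpha}(z_{0})^{2} w(z_{0}) > 0$. Hence $w \le 0$ on $0 < |z| \le r$, which upon exponentiation is exactly (\ref{eq:harnacknew}). The role of the assumption $\lambda(z)/\lambda_{\alpha}(z) \to 1$ is precisely to supply the boundary condition at $z = 0$, a role played automatically by $1/\log(1/|z|) \to 0$ in the logarithmic case.
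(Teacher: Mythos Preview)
Your proposal is correct and follows the paper's proof essentially verbatim: the same comparison function $u=\log(\lambda/\lambda_{\alpha})$, the same auxiliary $v_{\alpha}$, the same barrier $w=u+\eps v_{\alpha}$, and the same interior-maximum contradiction. If anything you are slightly more explicit, since you actually sketch the verification of $\Delta v_{\alpha}\ge 8\lambda_{\alpha}^{2}v_{\alpha}$ (which the paper merely asserts with ``Observe that\ldots'') and you spell out that the hypothesis $\lambda/\lambda_{\alpha}\to 1$ is what supplies the boundary condition at $z=0$.
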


\begin{proof} In the proof of Lemma \ref{lem:conic}, we have already observed that 
  the function
  $$u(z):=\log \frac{\lambda(z)}{\lambda_{\alpha}(z)} \, $$
  is continuous on $\D$ and $C^2$--smooth and nonpositive on $G_{\lambda}$ with 
  $$ \Delta u \ge 
  8
  \lambda_{\alpha}(z)^2 u \quad \text{ in } G_{\lambda} \, .$$
 Observe that
  $$\Delta v_{\alpha}\ge8 \lambda_{\alpha}(z)^2 v_{\alpha} \quad \text{ in } \D' \supseteq G_{\lambda} \, .$$
  We next fix $r \in (0,1)$ and define
  $$ \eps:=-\frac{1}{v_{\alpha}(r)} \max \limits_{|\xi|=r} u(\xi) . \, $$
As before we we can assume $\max_{|\xi|=r} u(\xi)>-\infty$, so $\eps \in [0,+\infty)$.
Then, from the  construction and using
$$ \lim \limits_{z \to 0} v_{\alpha}(z)=0 \, , $$
we see that the function $w:=u+\eps v_{\alpha}$ fulflills
  \begin{equation} \label{eq:bdd22new}
 \Delta w \ge 8 \lambda_{\alpha}(z)^2 w \, \quad \text{ in } \quad G_\lambda
 \cap \D'  \, ,
 \end{equation}
  and
  \begin{equation} \label{eq:bdd23new}
    w(z) \le 0 \quad \text{ for all } |z|=r \, .
    \end{equation}
as well as 
  \begin{equation} \label{eq:bdd24new}
    \limsup \limits_{z \to 0} w(z) \le 0 \, .
    \end{equation}
  Making use of (\ref{eq:bdd22new})--(\ref{eq:bdd24new}), we argue as before to conclude that
  \begin{equation} \label{eq:55new}
w(z) \le 0 \quad \text{ for all } 0<|z| \le r \, .
\end{equation}
This is equivalent to 
(\ref{eq:harnacknew}).
\end{proof}

\begin{corollary}[Hopf lemma for conical singularities] \label{cor:hopfconic}
Let   $\lambda(z) \, |dz|$ be a conformal pseudometric  with curvature
$\kappa_{\lambda} \le -4$ on $\D'$ and a conical singularity at $z=0$ of order $\alpha <1$. Then either $\lambda \equiv
\lambda_{\alpha}$ or
  \begin{equation} \label{eq:hopfnew}
\limsup \limits_{z \to 0} \left(\log \frac{\lambda(z)}{\lambda_{\alpha}(z)} \right) \cdot  |z|^{2 (\alpha-1)}<0 \, .
\end{equation}
  \end{corollary}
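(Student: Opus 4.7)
The plan is to mirror the strategy of Corollary \ref{cor:hopf}, combining the strong interior Ahlfors--Schwarz lemma for conical singularities (Lemma \ref{lem:conic}) with the Harnack inequality just proved (Theorem \ref{thm:harnackconic}). Assume $\lambda \not\equiv \lambda_{\alpha}$. By Lemma \ref{lem:conic}, no interior point of $\D'$ is an equality point, so
$$ \lambda(z)<\lambda_{\alpha}(z) \quad \text{for all } z \in \D' \, , $$
and hence $\log(\lambda/\lambda_{\alpha})$ is a strictly negative continuous function on $\D'$.

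Next I would extract the value of $L := \lim_{z \to 0} \lambda(z)/\lambda_{\alpha}(z)$, which exists because the conical singularity hypothesis (\ref{eq:conicsing}) gives $\log \lambda(z)=-\alpha \log|z|+v(z)$ with $v$ continuous on $\D$, while $\log \lambda_{\alpha}(z)=-\alpha \log|z|+\log(1-\alpha)-\log(1-|z|^{2(1-\alpha)})$; subtracting shows $\log(\lambda/\lambda_{\alpha})$ extends continuously to $z=0$. In particular $L \in [0,1]$. If $L<1$, then $\log(\lambda(z)/\lambda_{\alpha}(z)) \to \log L \in [-\infty,0)$ as $z \to 0$, while $|z|^{2(\alpha-1)} \to +\infty$, so the limsup on the left of (\ref{eq:hopfnew}) equals $-\infty$, which is certainly negative. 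Hence the only nontrivial case is $L=1$.

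In the case $L=1$, the hypothesis of Theorem \ref{thm:harnackconic} is met. Fix any $r \in (0,1)$; by compactness of the circle $\{|\xi|=r\}$ and the strict inequality $\lambda<\lambda_{\alpha}$,
$$ M_r:=\max_{|\xi|=r}\frac{\lambda(\xi)}{\lambda_{\alpha}(\xi)}<1 \, .$$
Taking logarithms in (\ref{eq:harnacknew}) yields
$$ \log\frac{\lambda(z)}{\lambda_{\alpha}(z)} \le \frac{v_{\alpha}(z)}{v_{\alpha}(r)}\, \log M_r \qquad \text{for } 0<|z|<r \, .$$
Since $\log M_r<0$ and $v_{\alpha}(z)=|z|^{2(1-\alpha)}/(1-|z|^{2(1-\alpha)})$, multiplying by $|z|^{2(\alpha-1)}=|z|^{-2(1-\alpha)}$ and letting $z \to 0$ gives
$$ \limsup_{z \to 0} \left(\log\frac{\lambda(z)}{\lambda_{\alpha}(z)}\right)\cdot |z|^{2(\alpha-1)} \le \frac{\log M_r}{v_{\alpha}(r)}<0 \, , $$
which is precisely (\ref{eq:hopfnew}). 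There is no serious obstacle in this argument, because all the heavy lifting has been done in Lemma \ref{lem:conic} and Theorem \ref{thm:harnackconic}; the only point requiring a moment's care is splitting off the trivial subcase $L<1$ so that the normalization assumed in Theorem \ref{thm:harnackconic} is available when we actually need the Harnack bound.
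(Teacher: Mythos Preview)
Your proof is correct and follows essentially the same approach as the paper's: reduce to the case $\lim_{z\to 0}\lambda(z)/\lambda_{\alpha}(z)=1$ (the other case being trivial since the limsup is $-\infty$), invoke Lemma~\ref{lem:conic} for the strict interior inequality, and then read off (\ref{eq:hopfnew}) from the Harnack bound (\ref{eq:harnacknew}). Your write--up is in fact more explicit than the paper's terse version, spelling out the continuous extension of $\log(\lambda/\lambda_{\alpha})$ to $z=0$ and the limiting behaviour of $v_{\alpha}(z)\,|z|^{2(\alpha-1)}\to 1$.
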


  \begin{proof} We note that $\lambda/\lambda_{\alpha}$ has a continuous extension to $z=0$, so we may assume that 
    $$ \lim \limits_{z \to 0} \frac{\lambda(z)}{\lambda_{\alpha}(z)}=1 \, $$
    since otherwise this limit is $<1$ and (\ref{eq:hopfnew}) is trivially true. 
      As we have already noted, we have   $\lambda(z)<\lambda_{\alpha}(z)$ for all $z \in
\D'$ provided that $\lambda(z_0)<\lambda_{\alpha}(z_0)$ for one $z_0 \in
\D'$. Hence
(\ref{eq:hopf}) follows immediately from (\ref{eq:harnacknew}). 
\end{proof}

\begin{remark} An inspection of the argument above shows $$ \limsup \limits_{z \to 0} \left( \log
  \frac{\lambda(z)}{\lambda_{\alpha}(z)} \right)\cdot  |z|^{2 (\alpha-1)} \le \inf
\limits_{0 \le r < 1} \left[ \left(\max
\limits_{|\xi|=r} \log \frac{\lambda(\xi)}{\lambda_{\alpha}(\xi)} \right)
\cdot r^{2(\alpha-1)} \right]   \, .
$$
  \end{remark}

\section{Some questions} \label{sec:questions}

\subsection{}  Theorem \ref{thm:3} and Theorem \ref{thm:main2new} cover by far not all possible situations. It would be interesting to investigate the sharpness of the error bound in the boundary Ahlfors--Schwarz lemma (Theorem \ref{thm:main1new}) when neither of the hypotheses  of Theorem \ref{thm:3} nor Theorem \ref{thm:main2new} hold.

\subsection{} As observed in Remark \ref{rem:bk} the boundary Ahlfors--Schwarz lemma for the case of the unit disk (\cite[Theorem 2.6]{BKR}) provides an extension of 
the boundary Schwarz lemma of Burns and Krantz.
Baracco, Zaitsev and Zampieri \cite{BaraccoZaitsevZampieri2006} have proved the following strengthening of the theorem of  Burns and Krantz: If $f : \D \to \D$ is holomorphic and if
$$ f(z_n)=z_n+o(|p-z_n|^3)$$
for some sequence $\{z_n\}$ in $\D$ converging to some point $p \in\partial \D$, then $f(z) \equiv z$.
\begin{problem1}
Does this strengthened version of the Burns--Krantz theorem also follow from the boundary Ahlfors--Schwarz lemma for the unit disk (Theorem \ref{thm:main1new} for $\Omega=\D$)? 
\end{problem1}

\subsection{}
Let $f : \D \to \D$ be holomorphic and $p \in \partial \D$. We have seen in Remark \ref{rem:bk}
that  if
$$
\left(1-|z|^2 \right) \frac{|f'(z)|}{1-|f(z)|^2} =1+o( (1-|z|)^2) \qquad \text{ as } z \to p \,  $$
along just one sequence $\{z_n\}$ tending to $p$, then $f \in \Aut(\D)$. In particular,  $f$ has a holomorphic extension to $z=p$.
In an earlier paper \cite{KRR2007} we have shown that if
\begin{equation} \label{eq:BddAhlfors}
  \left(1-|z|^2 \right) \frac{|f'(z)|}{1-|f(z)|^2} =1+o(1)
\end{equation}
as $z \to p$ \textit{unrestricted} in $\D$ or even only
$$ \liminf\limits_{z \to p} \frac{|f'(z)|}{1-|f(z)|^2}=+\infty \, $$
(again unrestricted) then $f$ still has a  holomorphic extension to $z=p$. However, it was shown in \cite{Zorboska2015}  that if one merely assumes that 
\begin{equation} \label{eq:nina}
  \angle \lim \limits_{z \to p} \left(1-|z|^2 \right) \frac{|f'(z)|}{1-|f(z)|^2}=1
\end{equation}
(\textit{angular limit}), then $f : \D  \to \D$ does not need to have an angular derivative at $z=p$.
\begin{problem1}
Does  (\ref{eq:nina}) imply that $f$ has an angular limit at $z=p$\,?
\end{problem1}

\begin{problem1}
Suppose that $w$ is continuous at $z=p$ with $w(0)=0$ and
$$ \left(1-|z|^2 \right) \frac{|f'(z)|}{1-|f(z)|^2}=1+o(w(z)) \, $$
as $z \to p$ \textit{angularly}. Under which conditions on $w$  does $f$ have an angular derivative at $z=p$? 
\end{problem1}

We finally note that the boundary behaviour of the quantity
$$  \frac{|f'(z)|}{1-|f(z)|^2}$$
on the \textit{entire} unit circle in connection whether $f$ is inner or not has been investigated in \cite{AAN,Kraus2013}, see Theorem 3 in \cite{AAN} and Lemma 2.10 in \cite{Kraus2013}.

\end{document}